\newcommand{\nc}{\newcommand}
\newcommand{\delete}[1]{}
\nc{\bfk}{{\bf k}}
\nc{\mlabel}[1]{\label{#1}}  
\nc{\mcite}[1]{\cite{#1}}  
\nc{\mref}[1]{\ref{#1}}  
\nc{\mbibitem}[1]{\bibitem{#1}} 
\nc{\mlabel}[1]{\label{#1}  
{\hfill \hspace{1cm}{\small\tt{{\ }\hfill(#1)}}}}
\nc{\mcite}[1]{\cite{#1}{\small{\tt{{\ }(#1)}}}}  
\nc{\mref}[1]{\ref{#1}{{\tt{{\ }(#1)}}}}  
\nc{\mbibitem}[1]{\bibitem[\bf #1]{#1}} 
\newtheorem{theorem}{Theorem}[section]
\newtheorem{proposition}[theorem]{Proposition}
\newtheorem{lemma}[theorem]{Lemma}
\newtheorem{corollary}[theorem]{Corollary}
\theoremstyle{definition}
\newtheorem{definition}[theorem]{Definition}
\newtheorem{notation}[theorem]{Notation}
\newtheorem{remark}[theorem]{Remark}
\newtheorem{tempex}[theorem]{Example}
\newtheorem{tempexs}[theorem]{Examples}
\newtheorem{temprmk}[theorem]{Remark}
\newtheorem{tempexer}{Exercise}[section]
\newenvironment{example}{\begin{tempex}\rm}{\end{tempex}}
\nc{\tred}[1]{\textcolor{red}{#1}} \nc{\tgreen}[1]{\textcolor{green}{#1}}
\nc{\tblue}[1]{\textcolor{blue}{#1}} \nc{\tpurple}[1]{\textcolor{purple}{#1}}
\nc{\hu}[1]{\tpurple{\underline{Hu:}#1 }}
\nc{\xing}[1]{\tblue{\underline{Xing:}#1 }}
\nc{\GS}{Gr\"obner-Shirshov\xspace}
\nc{\gsb}{Gr\"{o}bner-Shirshov basis\xspace}
\nc{\gsbs}{Gr\"{o}bner-Shirshov bases\xspace}
\nc\olie{operated Lie algebra\xspace}
\nc\olies{operated Lie algebras\xspace}
\nc\bfone{\mathbf{1}}
\nc\nas[1]{{#1}^\ast}
\nc\alsw[1]{{\rm ALSW}(#1)}
\nc\nlsw[1]{{\rm NLSW}(#1)}
\nc\clie[1]{[#1]}
\nc\lbar[1]{\overline{#1}}
\nc\suba[1]{|_{#1}}
\nc\coplie[1]{\bfk\nlsbw{#1}}
\nc\coplieo[2]{\bfk{\rm NLSBW}_{#2}(#1)}
\nc\lb[1]{\left[#1\right]}\nc\dt[1]{{t^{(#1)}}}
\nc{\Irr}{\mathrm{Irr}}
\nc\blw[1]{\lfloor#1\rfloor}
\nc\plie[1]{\mathfrak{S}(#1)}
\nc\plien[1]{\mathcal{N}(#1)}
\nc{\lc}{\lfloor} \nc{\rc}{\rfloor}
\nc\Id{\rm Id}\nc\sopm[1]{\mathfrak{S}^\star(#1)}
\nc\ordc{>_{{\rm Dl}}} \nc\ordqc{\geq_{{\rm Dl}}}
\nc\ord{>_{{\rm IM }}}\nc\ordq{\geq _{\rm IM}}
\nc\ordd{>_{{\rm IM}}}\nc\ordqd{\geq_{{\rm IM}}}
\nc\ordb{>_{{\rm IM}}}\nc\ordqb{\geq_{{\rm IM}}}
\nc\alsbw[1]{{\rm ALSBW}_{\ordq}(#1)} \nc\nlsbw[1]{{\rm NLSBW}_{\ordq}(#1)}
\nc\alsbwo[2]{{\rm ALSBW}_{#2}(#1)} \nc\nlsbwo[2]{{\rm NLSBW}_{#2}(#1)}\nc\bws[1]{{\lfloor#1\rfloor}}\nc\oplie{{\rm OLie}(X)}
\nc{\dep}{{\rm dep}}
\nc\ordt{\geq_{\rm dt}}
\nc\ordtl{>_{\rm dt}}
\begin{document}

\title[Some new operated Lie polynomial identities and Gr\"obner-Shirshov bases]{Some new operated Lie polynomial identities and Gr\"obner-Shirshov bases}

\author{Huhu Zhang} \address{School of Mathematics and Statistics,
Lanzhou University, Lanzhou, 730000, China}
\email{zhanghh20@lzu.edu.cn}

\author{Xing Gao$^{*}$}\thanks{*Corresponding author}
\address{School of Mathematics and Statistics, Lanzhou University
Lanzhou, 730000, China;
Gansu Provincial Research Center for Basic Disciplines of Mathematics
and Statistics
Lanzhou, 730070, China;
School of Mathematics and Statistics
Qinghai Nationalities University, Xining, 810007, China
}
\email{gaoxing@lzu.edu.cn}

\author{Tingzeng Wu}
\address{School of Mathematics and Statistics, Qinghai Nationalities University, Xining, Qinghai 810007,China;Qinghai Institute of Applied Mathematics, Xining, Qinghai 810007, China}
\email{mathtzwu@163.com}

\author{Xinyang Feng}
\address{College of Science, Northwest A$\&$F University, Yangling, Shaanxi, 712100, P.R. China}
\email{fxy1012@126.com}

\date{\today}

\begin{abstract}
Bremner and Elgendy developed a classification of operated polynomial identities for linear operators on associative algebras, encompassing both classical and newly discovered cases. Within the framework of Rota's Program, each of these new operated associative polynomial identities was shown to be Gr\"obner-Shirshov. This naturally led to a question posed by Guo and collaborators: is each corresponding operated Lie polynomial identity also Gr\"obner-Shirshov?
In this paper, we provide an affirmative answer by proving that each such Lie analogue indeed is Gr\"obner-Shirshov, thereby enriching the development of Rota's Program on algebraic operators within the Lie algebraic setting.
\end{abstract}

\makeatletter
\@namedef{subjclassname@2020}{\textup{2020} Mathematics Subject Classification}
\makeatother
\subjclass[2020]{
    18M70,  
	05A05,   
	16W99, 
}

\keywords{Rota's Programm; \GS basis; Linear operator; Lie algebra}

\maketitle

\tableofcontents

\setcounter{section}{0}

\allowdisplaybreaks

\section{Introduction}
This paper addresses a question posed in~\cite{ZGG}: is each operated Lie polynomial identity (OLPI), corresponding to the new classes of operated polynomial identities (OPIs) on associative algebras introduced in~\cite{BE}, Gr\"obner-Shirshov? We provide an affirmative answer by proving that each such OLPI---both the newly introduced and classical ones (such as the OLPI for the averaging operator)---indeed is Gr\"obner-Shirshov, including cases that had previously remained unresolved. In particular, we identify new classes of good OLPIs---those that are Gr\"obner-Shirshov---whose associative counterparts in the work of Bremner and Elgendy exhibit the same property. These new OLPIs warrant further study, much like their associative analogues. Our results contribute to the broader development of Rota's Program on Algebraic Operators within the Lie algebraic setting, as envisioned in~\cite{ZGG}.

\subsection{Rota's Program on Algebraic Operators for associative algebras}
Linear operators characterized by the algebraic identities they satisfy have played a central role in both classical and modern mathematics, attracting growing attention in recent years. On associative algebras, many such operators arise naturally from diverse areas of mathematics and physics. Some notable examples include:
\begin{enumerate}
\item Homomorphism operators, originating in algebra and Galois theory, satisfy the identity
  $$f(xy)=f(x)f(y).$$

\item Derivations, fundamental in analysis and differential geometry~\cite{Ko, VS}, obey the Leibniz rule
  $$d(xy)=d(x)y+xd(y).$$

\item  Rota-Baxter operators, introduced by Baxter in the context of probability theory~\cite{Ba}, satisfy
$$P(x)P(y)=P(P(x)y+xP(y)+\lambda xy).$$

\item Modified Rota-Baxter operators (or Hilbert-type operators), arising in signal processing and fluid mechanics~\cite{Co, Tr}, satisfy
$$P(x)P(y) = P(xP(y)) + P(P(x)y) + \lambda xy.$$

\item Nijenhuis operators, introduced by  Cari\~nena et al.~\cite{CGM} to study quantum bi-Hamiltonian systems and constructed by
analogy with Poisson-Nijenhuis geometry, from the relative Rota-Baxter algebras~\cite{Uc}, satisfy
$$P(x)P(y) = P(xP(y) + P(x)y - P(xy)).$$

\item TD operators, first appearing in combinatorial contexts~\cite{Le}, satisfy
$$P(x)P(y) = P(xP(y) + P(x)y - xP(1)y).$$

\item Reynolds operators, with origins in turbulence theory~\cite{Re}, are defined by
$$P(x)P(y) = P(xP(y) + P(x)y- P(x)P(y)).$$

\end{enumerate}
These operators not only emerge from foundational mathematical theories but also find applications in diverse areas such as combinatorics, quantum groups, operads, Hopf algebras, integrable systems, and the renormalization of quantum field theories~\cite{Ag, Bai, BBGN, CK, Go, Go1, GK, Ni, Ro, ZGG20}. For further references and developments, see~\cite{GGZ, GGZ1,GSZ}.

Motivated by their ubiquity and structural importance, Gian-Carlo Rota proposed a systematic program to classify all possible algebraic identities that can be satisfied by linear operators on algebras. As Rota expressed:
\begin{quote}
{\em In a series of papers, I have tried to show that other linear operators satisfying
algebraic identities may be of equal importance in studying certain algebraic
phenomena, and I have posed the problem of finding all possible algebraic identities that can be satisfied by a linear operator on an algebra.}
\end{quote}
This broad and ambitious initiative, now known as Rota's Program on Algebraic Operators, goes beyond compiling known operator identities---it seeks to discover and classify all potential algebraic operator identities relevant to mathematical structures. Guo and collaborators~\mcite{GG} formalized this program in the context of associative algebras, using the tools of rewriting systems and Gr\"obner-Shirshov bases to analyze OPIs in a systematic and computationally tractable framework.

\subsection{Rota's Program on Algebraic Operators for Lie algebras}
Linear operators have also been extensively studied in the setting of Lie algebras, where they play roles analogous to those in associative algebras. For instance, derivations are central to the construction of Lie algebra complexes and their homologies. The Rota-Baxter operator on Lie algebras originally arose as an operator form of the classical Yang-Baxter equation and has since become a key tool in the theory of integrable systems~\mcite{BGN,RS1,RS2,STS}. For further developments, see~\mcite{GuV,GvK}. Motivated by these connections, it is natural to extend Rota's program on Algebraic Operators---initially formulated for associative algebras---to the realm of Lie algebras.

In recent work, Guo et al. have reformulated Rota's program for Lie algebras within the framework of rewriting systems and Gr\"{o}bner-Shirshov bases~\cite{ZGG}. Specifically, by interpreting operator identities as elements in the operated Lie polynomial algebra, one can analyze whether a given operator behaves well---i.e., is good---on Lie algebras through its associated rewriting system and Gr\"{o}bner-Shirshov basis.

Using this approach, Guo et al. have identified several classes of such good operators on Lie algebras, including modified Rota-Baxter operators, differential-type operators, and Rota-Baxter-type operators~\cite{ZGG}. Despite these advances, a complete resolution of Rota's program for algebraic operators---whether for associative or Lie algebras---remains a highly challenging task, even within the powerful frameworks of rewriting systems and Gr\"{o}bner-Shirshov bases.

\subsection{Motivation and outline of the paper.}
A different approach to Rota's Program on Algebraic Operators for associative algebras was explored in~\cite{BE}, where several new classes of OPIs were introduced. It was later shown in~\cite{WZG} that each of these OPIs is Gr\"obner-Shirshov in the associative setting. This naturally led to the question raised in~\cite{ZGG}: are the corresponding OLPIs also each Gr\"obner-Shirshov?

In this paper, we provide an affirmative answer to this question. As a result, we identify additional classes of good operators on Lie algebras in the sense of Rota's Program on Algebraic Operators.
It is worth emphasizing that verifying that OLPIs are Gr\"obner-Shirshov is more subtle than proving the Gr\"obner-Shirshov property of the corresponding associative identities. This complexity arises for two main reasons. First, the computations involve more terms---mirroring the fact that the Jacobi identity has one more term than the associative law. Second, the basis elements in free operated Lie algebras, namely the non-associative Lyndon-Shirshov bracketed words, are structurally more intricate than the bracketed words forming the basis for free operated associative algebras.

\vspace{0.3cm}

\noindent{\bf Outline of the paper.}
Section~\mref{se:gsb} reviews the construction of free operated Lie algebras using Lyndon-Shirshov bracketed words and recalls the Gr\"obner-Shirshov theory for operated Lie algebras. In Section~\mref{se:ol}, we formulate Lie algebraic analogues OLPIs of the OPIs from~\cite[Theorems 5.1 and 6.12]{BE}. We then prove that all OLPIs of operated degree 1 are Gr\"obner-Shirshov with respect to the invariant monomial order $\ordt$ (Theorem~\mref{tm:d1}).
Finally, we establish that all OLPIs of operated degree 2 are also Gr\"obner-Shirshov, with respect to the invariant monomial orders $\ordt$ or $\ordqc$ (Theorem~\mref{tm:sr}).

\vspace{0.3cm}

\noindent
{\bf Notation.}  Throughout this paper, let $\bfk$ be a field of characteristic zero, which serves as the base field for all vector spaces, algebras, and linear maps. For a set $X$, denote by $\bfk X$ the $\bfk$-vector space spanned by $X$, and let $\blw{X} := {\blw{x} \mid x \in X}$ be a disjoint copy of $X$. Let $S(X)$ (resp. $M(X)$) denote the free semigroup (resp. free monoid) on $X$, consisting of words (resp. words including the empty word $\bfone$) over the alphabet $X$. The free magma on $X$, denoted $\nas{X}$, consists of non-associative binary words built from $X$, recursively defined such that each element is either an element of $X$ or a bracketed pair $(w) = ((u)(v))$ for $(u), (v) \in \nas{X}$.

\section{\gsbs of free operated Lie algebras} \mlabel{se:gsb}
In this section, we review the theory of Gr\"obner-Shirshov bases for free operated Lie algebras~\cite{QC,ZGG}, as developed in terms of Lyndon-Shirshov bracketed words.

\subsection{Lyndon-Shirshov words}
Let $(X, \geq)$ be a well-ordered set. Define the lex-order $\geq_{\rm lex}$ on the free monoid $M(X)$ over $X$ by
\begin{enumerate}
	\item $\bfone>_{\rm lex} u$ for all nonempty word $u$, and
	\item for any $u = xu'$ and  $v = yv'$ with $x, y\in X$,
$$ u>_{\rm lex} v\,\text{ if }\,  x > y, \text{ or }\, x = y\,\text{ and }\, u'>_{\rm lex} v'.$$
\end{enumerate}
For example, let $x>y$. Then $\bfone>_{\rm lex}x>_{\rm lex}xx>_{\rm lex}xy>_{\rm lex}\cdots>_{\rm lex}y>_{\rm lex}yx>_{\rm lex}yy.$
\begin{definition} Let $(X, \geq)$ be a well-ordered set.
\begin{enumerate}
\item An associative word $w\in S(X)$ is called an {\bf associative Lyndon-Shirshov word} on $X$ with respect to the lex-order $\geq_{\rm lex}$, if $w =uv >_{\rm lex}vu$ for every decomposition $w = uv$ of $w$ with $u,v\in S(X)$.
\item A non-associative word $(w)\in \nas{X}$  is called a {\bf non-associative Lyndon-Shirshov word} on $X$ with respect to the lex-order $\geq_{\rm lex}$, provided
\begin{enumerate}
  \item the corresponding associative word $w$ is an associative Lyndon-Shirshov word on $X$;
  \item if $(w) = ((u)(v))$, then both $(u)$ and $(v)$ are non-associative Lyndon-Shirshov words on $X$;
  \item if $(w) = ((u)(v))$ and $(u) = ((u_1 )(u_2 ))$, then $v \geq_{\rm lex} u_2.$
\end{enumerate}
\end{enumerate}
\end{definition}

\begin{remark}
\begin{enumerate}
\item A word $w$ is an associative Lyndon-Shirshov word if and only if
for any decomposition of $w = uv$ with $u,v\in S(X)$, we have
$w>_{\rm lex} v$~\mcite{BC}.

\item Non-associative Lyndon(-Shirshov) words form a linear basis for a free Lie algebra~\cite{BC}.
\end{enumerate}
\end{remark}

Denote by $\alsw{X}$ (resp. $\nlsw{X}$) the set of all associative (resp. non-associative) Lyndon-Shirshov words on a well-ordered set $X$ with respect to the lex-order $\geq_{\rm lex}$. For any $w\in\alsw{X}$, there exists a unique
procedure, called the {\bf Shirshov standard bracketing}~\cite{BC}, to give a non-associative Lyndon-Shirshov word $[w]$. Furthermore, $\nlsw{X} =\left\{[w]\,|\,w \in\alsw{X}\right\}.$

\begin{example}
Let $X=\{x,y,z\}$ with $x>y>z$. Then
\begin{enumerate}
  \item $xy$, $xyz,xzy\in \alsw{X}$ and $yx$, $yxz$, $yzx$, $zxy$, $zyx$ $\notin\alsw{X}$.
  \item $(xy)$, $(x(yz)), ((xz)y)\in \nlsw{X}$ and $(yx),((xy)z) \notin \nlsw{X}$.
  \item We display three examples of the Shirshov standard bracketing
  $$[xy]=(xy), [xyz]=(x(yz))\,\text{ and }\,[xzy]=((xz)y).$$
\end{enumerate}
\end{example}

\subsection{Lyndon-Shirshov bracketed words}
This subsection is devoted to collecting some definitions and results related to the Gr\"obner-Shirshov bases of operated Lie algebras~\mcite{QC,ZGG}.

\begin{definition}~\mcite{QC}
\begin{enumerate}
\item An {\bf \olie} is a Lie algebra $L$ together with a linear map
$P_L: L\to L$.
\item A {\bf morphism} from an \olie $(L_1,P_{L_1})$ to an \olie $(L_2,P_{L_2})$
is a Lie algebra homomorphism $f : L_1\to L_2$ such that
$f\circ P_{L_1}=P_{L_2}\circ f.$
\end{enumerate}
\end{definition}

Let $X$ be a set. The sets of associative and non-associative bracketed words on $X$ by direct systems $\plie{X}_n$ and $\plien{X}_n, n\geq 0$ defined recursively on $n$.
For the initial step of $n=0$, define $\plie{X}_0:=S(X)  \text{  and } \plien{X}_0:=\nas{X}.$ For the inductive step, define
$$\plie{X}_{n+1}:= S(X\cup \blw{\plie{X}_{n}})\,\text{ and }\, \plien{X}_{n+1}:= \nas{(X\cup \blw{\plien{X}_{n}})}.$$
Finally, define the direct limits
\vspace{-.2cm}
$$\plie{X}:=\bigcup_{n\geq0}\plie{X}_{n}\,\text{ and }\, \plien{X}:=\bigcup_{n\geq0}\plien{X}_{n}.$$
Elements of $\plie{X}$ (resp. $\plien{X}$) are called the {\bf associative  (resp. non-associative) bracketed words on $X$.}
We also denote an element of $\plie{X}$ (resp. $\plien{X}$) by $w$ (resp. $(w)$).

\begin{notation}
\begin{enumerate}
	\item Every element $w$ of $\plie{X}$ can be uniquely written in the form
$w=w_1\cdots w_k,$
	for $w_1,\ldots,w_k\in X\cup \blw{\plie{X}},k\geq 1$. We call $w_i$ {\bf prime} and $|w|:=k$ the {\bf breadth} of $w$.
	
	\item Define the {\bf depth} of $w\in\plie{X}$ to be ${\rm dep(w)} := {\rm min}\{n\,|\,w\in\plie{X}_n\}.$
	
	\item For any $(w)\in\plien{X}$, there exists a unique $w\in\plie{X}$ by forgetting the brackets of $(w)$. Then we can define the {\bf depth} of $(w)\in\plien{X}$ to be ${\rm dep((w))} := {\rm dep(w)}.$ This agrees with $\min \{n\,|\, (w)\in \plien{X}_n\}$.
	
	\item The {\bf degree} of $w\in\plie{X}$, denoted by ${\rm deg}(w)$, is defined to be the total number of occurrences of all $x\in X$ and $\blw{~}$ in $w$.
\item The {\bf operated degree} of $w\in\plie{X}$, denoted by ${\rm odeg}(w)$, is defined to be the total number of occurrences of all $\blw{~}$ in $w$.
\end{enumerate}
\end{notation}

\begin{example}
Let $u=\blw{xy\blw{z}y}xy\in\plie{X}$ and  $(v)=x((\blw{y}(xy))z)\in \plien{X}$  with $x,y,z\in X$. Then
$$|u|=3, \, {\rm dep}(u)=2,\, {\rm deg}(u)=8 \text{ and }\, {\rm odeg}(u)=2,$$
and
 $$v=x\blw{y}xyz\in \plie{X},{\rm dep}((v))={\rm dep}(v)=1\,\text{ and }\, {\rm odeg}((v))=1.$$
\end{example}

Let $\star$ be a symbol not in a set $X$. A word in $\plie{X\sqcup \star}$ is called a {\bf $\star$-word} on $X$ if $\star$ appears only once.
The set of all $\star$-words on $X$ is denoted by $\sopm{X}.$
\begin{definition}
Let $X$ be a set. A {\bf monomial order} on $\plie{X}$ is a well order $\geq$ on $\plie{X}$ such that
\vspace{-.1cm}
\begin{equation}
\quad  u >  v \Rightarrow  q\suba{u} >  q\suba{v},\,\text{ for all } u,v\in \plie{X} \text{ and } q\in\sopm{X}.
\mlabel{eq:mono} \notag
\end{equation}
\end{definition}

Let $\geq$ be a monomial order on $\plie{X}$.
For any $f\in \bfk\plie{X}$, let $\lbar{f}$ denote the {\bf leading monomial} of $f$.

\begin{definition}\mcite{ZGG}
  An order $\ordq$ on $\plie{X}$ is called {\bf invariant} if, for all prime elements $u_1, \ldots, u_n \in \plie{X}$ and $\sigma\in S_n$, we have
\vspace{-.1cm}
\begin{equation}
	u_1 \cdots u_n  \ordq u_{\sigma(1)} \cdots u_{\sigma(n)} \Longleftrightarrow u_1 \cdots u_n  \succeq_{\rm lex} u_{\sigma(1)} \cdots u_{\sigma(n)},
	\mlabel{eq:dlor}
\end{equation}
where $\succeq$ is the restriction of $\geq$ to the set of prime elements $X\sqcup\blw{\plie{X}}$.
\mlabel{de:inv}
\end{definition}

We are going to recall the construction of the free operated Lie algebras~\mcite{ZGG}. For a fixed invariant monomial order $\ordq$ on $\plie{X}$, define
$$\alsbw{X}_0:=\alsw{{X}}$$
and then
$$\nlsbw{X}_0:=\nlsw{{X}}=\left \{[w]\,\left |\,w\in \alsbw{X}_0\right .\right \}$$
with respect to the order $\succeq_{{\rm lex}}$.
For the recursion step, for any given $n\geq 0$, assume that we have defined
$$\alsbw{X}_{n}$$
and then
$$\nlsbw{X}_{n}=\left \{[w]\,\left|\,w\in \alsbw{X}_{n}\right . \right\}$$
with respect to the order $\succeq_{{\rm lex}}$.
Then we first define
$$\alsbw{X}_{n+1}:=\alsw{X\sqcup \blw{\alsbw{X}_{n}}}$$
with respect to the order $\succeq_{{\rm lex}}$.
We then denote
$$\big[X\sqcup \blw{\alsbw{X}_{n}}\big]:=\left\{[w]:=
\left\{\left .
\begin{array}{ll}
w, & \hbox{if  $w\in X$}, \\
\blw{[w']}, & \hbox{if $w=\blw{w'}$}
\end{array}
\right.\,\right|\, w \in X\sqcup \blw{\alsbw{X}_{n}}\right\}$$
and define
\begin{equation}
\nlsbw{X}_{n+1}:=\nlsw{\big[X\sqcup \blw{\alsbw{X}_{n}}\big]}=\left\{[w]\,\left|\,w\in\alsbw{X}_{n+1}\right.\right\}
\mlabel{eq:bnlsbw}
\end{equation}
with respect to the order $\succeq_{\rm lex}$.
We finally denote
$$\alsbw{X}:=\bigcup_{n\geq0}\alsbw{X}_n\,\text{ and }\,\nlsbw{X}:=\bigcup_{n\geq0}\nlsbw{X}_n.$$
We have
\begin{equation}
\nlsbw{X}=\left\{[w]\,\left|\,w\in\alsbw{X}\right .\right\}.
\mlabel{eq:astolie}
\end{equation}
Elements of $\alsbw{X}$ (resp. $\nlsbw{X}$) are called {\bf associative} (resp. {\bf non-associative}) {\bf Lyndon-Shirshov bracketed words} with respect to $\ordq$.

\subsection{\gsbs of free operated Lie algebras}
The $\nlsbw{X}$ is a linear basis of the free operated Lie algebra~\cite[Theorem~2.13]{ZGG}.
There are two kinds of nontrivial compositions.

\begin{lemma}\cite{ZGG}
Let $f\in \coplie{Z}$ be monic and $q\suba{\lbar{\clie{f}}}\in \alsbw{Z}$. Then
$$\clie{[q\suba{f}]_{\lbar{\clie{f}}}}=q\suba{\clie{f}}+\sum_i \alpha_i q_i\suba{\clie{f}},$$
where each $\alpha_i\in\bfk$, $q_i\in\sopm{Z}$ and $q\suba{\lbar{\clie{f}}}\ord q_i\suba{\lbar{\clie{f}}}$.
\mlabel{lem:jeq}
\end{lemma}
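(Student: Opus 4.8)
The statement is the operated analogue of Shirshov's classical special‑bracketing lemma, so the plan is to reproduce the classical argument---bracketing a Lyndon--Shirshov word around a distinguished Lyndon--Shirshov subword---while propagating the operated structure through the layered construction of $\alsbw{Z}$, and to finish by invoking invariance of $\ordq$ to pass from the lexicographic order to $\ord$. Throughout I write $u := \lbar{\clie{f}} \in \alsbw{Z}$ for the leading associative word of $f$ and regard its Shirshov bracketing $\clie{u}$ as a single atom sitting in the $\star$-slot of $q$. Since the special bracketing scheme is determined by $q$ and $u$ alone, and $f$ merely occupies that slot, every operation in the statement is $\bfk$-linear in the element placed there; it therefore suffices to treat $\clie{u}$ as the atom and to substitute the full $f=\clie{u}+(\text{lower terms})$ only at the very end, the lower part of $f$ contributing only error terms whose leading words are strictly smaller.

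First I would prove the purely combinatorial heart: for the $\star$-word $q$ with $q\suba{u}\in\alsbw{Z}$, there is a special bracketing $[q\suba{u}]_u$---a non‑associative bracketing of the word $q\suba{u}$ in which $u$ is bracketed as the single atom $\clie{u}$---that lies in $\nlsbw{Z}$ with leading word $q\suba{u}$. The construction is recursive, by induction on the breadth of $q\suba{u}$, using the standard factorization of a Lyndon--Shirshov word $w=w'w''$ (with $w''$ its longest proper Lyndon--Shirshov suffix and $\clie{w}=[\clie{w'}\,\clie{w''}]$) to locate the distinguished occurrence of $u$ inside $w'$, inside $w''$, or across the split, bracketing the side free of $u$ by the ordinary Shirshov procedure and recursing on the side containing it. Expanding the result in the basis $\nlsbw{Z}$ of $\coplie{Z}$ via the Jacobi identity gives
$$ [q\suba{u}]_u = \clie{q\suba{u}} + \sum_i \alpha_i \clie{w_i}, $$
and, crucially, each Jacobi re‑association leaves $\clie{u}$ intact as an atom and only permutes the surrounding primes; hence every $w_i=q_i\suba{u}$ for some $\star$-word $q_i$, every $w_i$ is a rearrangement of the primes of $q\suba{u}$, and $q\suba{u}\succ_{\rm lex}w_i$.

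Second I would carry this through the operated layering. By construction $\alsbw{Z}_{n+1}=\alsw{Z\sqcup\blw{\alsbw{Z}_n}}$, so an associative Lyndon--Shirshov bracketed word is an ordinary Lyndon--Shirshov word over the enlarged alphabet of primes $Z\sqcup\blw{\alsbw{Z}_n}$, ordered by the restriction $\succeq_{\rm lex}$; when the distinguished $u$ sits at the top level of $q\suba{u}$ the combinatorial lemma applies verbatim over this alphabet. When instead the $\star$ of $q$ lies inside a bracketed prime $\blw{v}$, I would add an outer induction on $\dep(q\suba{u})$: first apply the special bracketing to $v$ relative to the nested $u$, then apply $\blw{\cdot}$ and bracket the surrounding context. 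The operator $\blw{\cdot}$ commutes with the bracketing in the required way, each error term stays of the form $q_i\suba{u}$, and the rearrangement now takes place layer by layer.

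Finally, substituting $f$ for the atom $\clie{u}$ and applying the outer $\clie{\cdot}$ turns the identity into
$$ \clie{[q\suba{f}]_{\lbar{\clie{f}}}} = q\suba{\clie{f}} + \sum_i \alpha_i q_i\suba{\clie{f}} . $$
To obtain the claimed inequality $q\suba{\lbar{\clie{f}}}\ord q_i\suba{\lbar{\clie{f}}}$ I would invoke invariance (Definition~\mref{de:inv}): since each $w_i=q_i\suba{u}$ is a rearrangement of the primes of $q\suba{u}$, the lexicographic domination $q\suba{u}\succ_{\rm lex}w_i$ upgrades to $q\suba{u}\ord w_i$ by~\eqref{eq:dlor}. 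I expect the main obstacle to be the core induction of the second paragraph in the case where $u$ straddles the standard factorization point, compounded by the layered bookkeeping of the third paragraph: one must verify that every Jacobi re‑association preserves $\clie{u}$ as an atom and yields only rearrangements of $q\suba{u}$ strictly below it in $\succeq_{\rm lex}$, so that invariance can be applied at the end. This precise matching of the Lyndon--Shirshov combinatorics with the invariant order across operated layers is the delicate point of the argument.
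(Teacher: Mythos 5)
The paper itself contains no proof of Lemma~\mref{lem:jeq}: it is imported verbatim from \mcite{ZGG}, where it is obtained by adapting Shirshov's special-bracketing lemma to bracketed words (cf.\ also \mcite{QC}). Measured against that proof, your overall architecture is the same and is the right one: reduce to the leading word $u:=\lbar{\clie{f}}$ treated as an atom in the $\star$-slot (linearity of substitution), construct a special bracketing of $q\suba{u}$ by viewing each layer as an ordinary Lyndon--Shirshov word over the alphabet $Z\sqcup\blw{\alsbw{Z}_n}$, induct on the depth of the $\star$ in $q$, and finally convert lex-comparisons of prime rearrangements into $\ord$ via invariance. Two secondary imprecisions: the special bracketing $[q\suba{u}]_u$ is in general \emph{not} an element of $\nlsbw{Z}$ (e.g.\ $[[xxy][xyy]]$ violates condition (c) of the definition of non-associative Lyndon--Shirshov words; what is true, and is all one needs, is that its expansion has leading word $q\suba{u}$); and for rearrangements occurring inside a bracketed prime, invariance (Definition~\mref{de:inv}) alone does not suffice, since it only governs top-level permutations---you also need the monomial property of $\ordq$ to push inequalities through $\blw{\,\cdot\,}$ and through substitution into the outer layers.

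The genuine gap is the one you yourself flag: the ``purely combinatorial heart'' is asserted rather than proven, and the recursion you propose for it fails exactly where you fear. If the occurrence of $u$ straddles the standard factorization $w=w'w''$ of $w=q\suba{u}$, there is no ``side containing $u$'' to recurse on, and this case genuinely occurs: with $x>y$, the word $w=xxyxyy$ is an associative Lyndon--Shirshov word with standard factorization $(x)(xyxyy)$, while its associative Lyndon--Shirshov subword $u=xxy$ occupies positions 1--3 and straddles the cut (neither factor contains it). Shirshov's argument does not recurse on the factorization; it first establishes the structural fact that for $w=aub$ the standard bracketing has the form $[a[uc]d]$ for some factorization $b=cd$, i.e.\ some sub-bracketing of $[w]$ begins exactly at the occurrence of $u$, and then replaces the inner factor $[uc]$ by $[\cdots[[u][c_1]][c_2]\cdots[c_k]]$, where $c=c_1\cdots c_k$ is the Chen--Fox--Lyndon factorization of $c$ into associative Lyndon--Shirshov words, verifying that the leading word is still $w$ (in the example above this yields precisely $[[xxy][xyy]]$, whose leading word is $xxyxyy$). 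Both steps are nontrivial Lyndon--Shirshov combinatorics, and neither is delivered by your three-case recursion; in particular, the claim that ``every Jacobi re-association leaves $[u]$ intact and yields only rearrangements strictly below $q\suba{u}$ in $\succeq_{\rm lex}$'' is precisely the content of the lemma for the \emph{special} bracketing (it is false for an arbitrary bracketing keeping $[u]$ atomic), not a consequence of atomicity. So your plan coincides in outline with the proof in the cited literature, but as written it does not close.
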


\begin{definition}
Let $X$ be a well-ordered set. Let $f, g \in\coplie{X}$ be monic.
\begin{enumerate}
\item  If there are $u, v, w\in \plie{X}$ such that $w = \lbar{\clie{f}}u = v\lbar{\clie{g}}$ with
$\max\{ |\lbar{\clie{f}}|, |\lbar{\clie{g}}|\}< |w| < |\lbar{\clie{f}}| + |\lbar{\clie{g}}|$, then
$$\langle f,g \rangle_w:=\langle f,g \rangle^{u,v}_w:= [fu]_{\lbar{\clie{f}}} - [vg]_{\lbar{\clie{g}}}$$
is called the {\bf intersection composition of $f$ and $g$ with respect to $w$}.
\label{item:intcompl}
\item  If there is $q\in \sopm{X}$ such that $w = \lbar{\clie{f}} = q\suba{\lbar{\clie{ g}}},$ then
$$\langle f,g \rangle_w:=\langle f,g \rangle^q_w := f - [q\suba{g}]_{\lbar{\clie{g}}}$$
is called the {\bf including composition of $f$ and $g$ with respect to $w$}.
\label{item:inccompl}
\end{enumerate}
\label{definition:compl}
\end{definition}
\begin{definition}\mcite{ZGG}
\label{def:gsbl}
Let $X$ be a well-ordered set. Let $S\subseteq\coplie{X}$ be monic.
\begin{enumerate}
\item An element $f\in\coplie{X}$ is called {\bf  trivial modulo $(S, w)$} if
$$f =\sum_i \alpha_i [q_i\suba{s_i}]_{\lbar{\clie{s_i}}}$$
with $w> q_i\suba{\lbar{\clie{s_i}}}$ for each special normal $s_i$-word $[q_i\suba{s_i}]_{\lbar{\clie{s_i}}}$, where each $ \alpha_i\in\bfk$, $q_i\in \sopm{X}$, $s_i\in S$.
In this case, we write $f\equiv 0 \mod(S, w).$
\item We call $S$ a {\bf \gsb} in $\coplie{Z}$ with respect to the  invariant monomial order $\ordq$ if, for all pairs $f, g \in S$, every intersection composition of the form $\langle f,g\rangle^{u,v}_w$ is trivial modulo $(S, w)$, and every including composition of the form $\langle f, g\rangle ^q_w$ is trivial modulo $(S, w)$.
\end{enumerate}
\end{definition}

The following \textit{Composition-Diamond lemma} is the cornerstone of Gr\"obner-Shirshov bases theory.

\begin{lemma}\cite{ZGG}\label{lem:cd}
Let $X$ be a well-ordered set, $\ordq$ an invariant monomial order on $\coplie{X}$ and ${S} \subseteq \coplie{X} $ monic. Denote $\Id({S})$ the ideal of $\coplie{X}$ generated by ${S}$. Then the following statements are equivalent.
\begin{enumerate}
    \item  ${S}$ is a Gr\"obner-Shirshov basis in $\coplie{X}.$
    \item  For all $f \neq 0$ in $\Id( S )$ , $\lbar{\clie{f}} = q\suba{\lbar{\clie{s}}}\in\alsbw{X}$ for some $q\in\sopm{X}$ and $s \in S$.
    \item $\coplie{X}=\bfk \Irr( S )\oplus \Id(S)$ and $\Irr(S)$ is a $\bfk$-basis of $\coplie{X}/ \Id(S)$, where
$$\Irr(S):=\left\{[w]\,\big|\, w\in\alsbw{X}, w\neq q\suba{\lbar{\clie{s}}}\,\text{ for }\, s\in S\,\text{ and }\, q\in\sopm{X}\right\}.$$
\end{enumerate}
\end{lemma}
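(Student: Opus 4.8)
The plan is to prove the three statements equivalent by establishing the cycle $(1)\Rightarrow(2)\Rightarrow(3)\Rightarrow(1)$, the standard architecture of a Composition-Diamond lemma. Two facts I would lean on throughout are that $\nlsbw{X}$ is a $\bfk$-basis of $\coplie{X}$ (cited above) and the expansion of Lemma~\ref{lem:jeq}: a special normal $s$-word $[q\suba{s}]_{\lbar{\clie{s}}}$ has leading associative word exactly $q\suba{\lbar{\clie{s}}}$, with all remaining terms strictly smaller. This is what lets me track leading words through arbitrary ideal elements, and the well-order property of $\ordq$ supplies the induction on leading monomials that drives every direction.

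For the crucial implication $(1)\Rightarrow(2)$, let $0\ne f\in\Id(S)$ and write $f=\sum_i\alpha_i[q_i\suba{s_i}]_{\lbar{\clie{s_i}}}$ with $q_i\in\sopm{X}$, $s_i\in S$; set $w_i:=q_i\suba{\lbar{\clie{s_i}}}$ and $w:=\max_i w_i$ under $\ordq$. By Lemma~\ref{lem:jeq} every associative word appearing in $f$ is $\ordq$-dominated by $w$, so $w\ordq\lbar{\clie{f}}$; if equality holds then $\lbar{\clie{f}}=q_{i_0}\suba{\lbar{\clie{s_{i_0}}}}$ for the maximizing index, which is exactly (2). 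I would then argue by induction on $w$ that the strict case $w\ord\lbar{\clie{f}}$ can always be reduced away. Strictness forces the top monomials to cancel, so at least two summands, say with $s=s_{i_1}$ and $g=s_{i_2}$, satisfy $w_{i_1}=w_{i_2}=w$, and the leading words $\lbar{\clie{s}}$, $\lbar{\clie{g}}$ occur as subwords of $w$ at the positions marked by $q_{i_1},q_{i_2}$. I would split on their relative position: when the occurrences are \emph{separated}, the difference of the two normal $s$-words reduces directly to normal $s$-words whose associated words lie strictly below $w$; when they \emph{overlap} partially or one nests inside the other, the difference matches, up to strictly lower terms, an intersection or including composition $\langle s,g\rangle_w$ of Definition~\ref{definition:compl}, which is trivial modulo $(S,w)$ by hypothesis. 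Either way the number of top-$w$ summands drops or $w$ itself decreases, and the induction closes.

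The remaining two implications are more routine. For $(2)\Rightarrow(3)$ I would prove spanning by transfinite induction on $\lbar{\clie{h}}$: if $\lbar{\clie{h}}$ is reducible, subtract the matching multiple of a normal $s$-word to strictly lower the leading word, which places $h$ in $\bfk\Irr(S)+\Id(S)$; the sum is direct because a nonzero element of $\bfk\Irr(S)\cap\Id(S)$ would, by (2), have a reducible leading word, contradicting membership in $\bfk\Irr(S)$, and this same observation yields the linear independence of $\Irr(S)$ in the quotient. For $(3)\Rightarrow(1)$, each composition $\langle f,g\rangle_w$ lies in $\Id(S)$ and, by Definition~\ref{definition:compl} together with Lemma~\ref{lem:jeq}, has leading word strictly below $w$; since $\coplie{X}=\bfk\Irr(S)\oplus\Id(S)$, its $\bfk\Irr(S)$-component vanishes, so reducing it as in $(2)\Rightarrow(3)$ rewrites it as a combination of normal $s$-words whose associated words lie strictly below $w$, that is, trivial modulo $(S,w)$.

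The main obstacle is the case analysis inside $(1)\Rightarrow(2)$ in the Lie setting. Because normal Lie $s$-words are assembled from non-associative Lyndon-Shirshov bracketed words via the Shirshov standard bracketing, matching the difference of two top-$w$ summands with a genuine composition of Definition~\ref{definition:compl} is far less transparent than in the associative case: one must control how the bracketing behaves under substitution into $\star$-words, which is precisely why compositions are defined through $[fu]_{\lbar{\clie{f}}}$ rather than naive products, and exactly the content encapsulated by Lemma~\ref{lem:jeq}. Verifying that separated occurrences contribute only terms strictly below $w$, and that every true overlap is captured by the intersection or including composition, is where essentially all of the Lie-specific bookkeeping concentrates.
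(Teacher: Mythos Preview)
The paper does not prove this lemma at all: it is stated with the citation \cite{ZGG} and immediately used, since it is precisely the Composition-Diamond lemma for operated Lie algebras established in that reference. There is therefore no in-paper argument to compare your proposal against.

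Your sketch follows the standard CD-lemma architecture $(1)\Rightarrow(2)\Rightarrow(3)\Rightarrow(1)$ and is essentially how such results are proved; the case analysis in $(1)\Rightarrow(2)$ that you identify as the crux is indeed where the Lie-specific work concentrates. One genuine wrinkle in your outline: the step $(3)\Rightarrow(1)$ appeals to ``reducing it as in $(2)\Rightarrow(3)$,'' but that reduction procedure needs every leading word encountered to be of the form $q\suba{\lbar{\clie{s}}}$, which is exactly statement (2)---and you have not yet derived (2) from (3). The clean repair is to insert $(3)\Rightarrow(2)$ first: if $0\neq f\in\Id(S)$ had $\lbar{\clie{f}}$ irreducible, then inducting on $\lbar{\clie{f}}$ to decompose $f-c[\lbar{\clie{f}}]$ would produce a nonzero element of $\bfk\Irr(S)\cap\Id(S)$, contradicting the direct sum in (3). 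With (2) in hand, the reduction you describe then gives $(2)\Rightarrow(1)$ directly.
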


\section{\gsbs for operated Lie polynomial identities}\mlabel{se:ol}
In this section, we prove that the Lie algebraic analogues of the operated polynomial identities in~\cite[Theorems 5.1 and 6.12]{BE} each form a Gr\"obner-Shirshov basis.

\begin{definition} Let $X$ be a well-ordered set and $\ordq$ an invariant monomial order on $\coplie{X}$.
\begin{enumerate}
  \item Elements of $\coplie{X}$ are called {\bf operated Lie polynomials}.
  \item For $\phi\in\coplie{X}$, we call $\phi=0$ (or simply $\phi$) an {\bf operated Lie polynomial identity (OLPI)}.
  \item Let $\Phi\subseteq\coplie{X}$ be a system of monic OLPIs.
We call $\Phi$ a {\bf \GS system} if $S_{\Phi}$ is a \gsb in $\coplie{X}$, where
$$S_{\Phi}=\{\phi(u_1,\ldots,u_k)\,|\,u_1,\ldots,u_k\in \coplie{X}, \phi\in \Phi\}\subseteq\coplie{X}.$$
\end{enumerate}
\end{definition}

The following are the Lie algebraic version of the operated polynomial identities in~\cite[Theorems 5.1 and 6.12]{BE}.
\begin{definition}\label{defn:nopi}
Let $X$ be a a set. For any $x,y\in X$, we define the following OLPIs.
\begin{enumerate}
\item \label{item:d1}{\bf OLPIs of operated degree 1}:
\begin{equation}\mlabel{ea:d1}
\begin{aligned}
&\blw{[xy]} , \,&&\blw{[xy]} - [x\blw{y}],  \,&&\blw{[xy]} - [\blw{x}y], \\
&\blw{[xy]} -[\blw{x}y] - [x\blw{y}],  \,&&[\blw{x}y] ,  \,&&[x\blw{y}] .
\end{aligned}
\end{equation}
\item\label{item:d2} {\bf OLPIs  of operated degree 2}:
\allowdisplaybreaks
\begin{equation}\mlabel{ea:d2}
\begin{aligned}
&\blw{\blw{[xy]}}, \,&&\blw{\blw{[xy]}} - [x\blw{\blw{y}}],  \,&&\blw{\blw{[xy]}} - [\blw{\blw{x}}y], \\
&\blw{\blw{[xy]}} -[\blw{\blw{x}}y] -[ x\blw{\blw{y}}],  \,&&[\blw{\blw{x}}y] ,  \,&&[x\blw{\blw{y} }],
\end{aligned}
\end{equation}
and
\allowdisplaybreaks
\begin{equation}\label{ea:d3}
\begin{aligned}
&[\blw{x}\blw{y}] - \blw{[\blw{x}y]} - \blw{[x \blw{y}]}&&\text{(Rota-Baxter)},\\
&[\blw{x}\blw{y}]- \blw{[\blw{x}y]} - \blw{[x \blw{y}]} + \blw{\blw{[x y]}}&&\text{(Nijenhuis)},\\
&[\blw{x}\blw{y}] - \blw{[x \blw{y}]} &&\text{(average)},\\
&[\blw{x}\blw{y}] - \blw{[\blw{x}y]} &&\text{(inverse average)},\\
&[x \blw{\blw{y}}]+\blw{\blw{[x y] }} + \blw{[x \blw{y }]} &&\text{(New identity A (right))},\\
&[\blw{\blw{x}}y]+\blw{\blw{[x y]} }+ \blw{[\blw{x}y]} &&\text{(New identity A (left))},\\
& \blw{\blw{[x y]} }+ \alpha\blw{[x \blw{y}]}- (\alpha + 1)[x \blw{\blw{y}}] \text{ with } 0\neq \alpha \in\bfk , &&\text{(New identity B (right))},\\
&\blw{\blw{[x y]} } + \beta\blw{[\blw{x}y]} - (\beta + 1)[\blw{\blw{x}}y] \text{ with } 0\neq \beta \in\bfk ,&&\text{(New identity B (left))},\\
&[\blw{\blw{x}}y]+\blw{\blw{[x y]}}  + [x \blw{\blw{y}}] + 2[\blw{x}\blw{y}] - 2\blw{[\blw{x}y]} - 2\blw{[x \blw{y}]}&&\text{(New identity C)},\\
&[\blw{\blw{x}}y]-\blw{[\blw{x}y]}  &&\text{(P 1 )},\\
&\blw{[\blw{x}y]} &&\text{(P 2 )},\\
&[x \blw{\blw{y}}]-\blw{[x \blw{y}]}  &&\text{(P 3 )},\\
&\blw{[x \blw{y}]} &&\text{(P 4 )},\\
&[\blw{x}\blw{y}] &&\text{(P 5 )}.
\end{aligned}
\end{equation}
\end{enumerate}
\end{definition}

We recall two examples of invariant monomial orders on $\plie{X}$ for later applications.

\noindent
{\bf First invariant monomial order $\ordqc$ on $\plie{X}$:} Let $(X, \geq)$ be a well-ordered set.
Take $u=u_1\cdots u_m$ and $v=v_1\cdots v_n$ in $\plie{X}$, where $u_i$ and $v_j$ are prime. Define
$u\ordc v $ inductively on $\dep(u)+\dep(v)\geq 0$.
For the initial step of $\dep(u)+\dep(v) = 0$, we have $u,v\in S(X)$ and define $u\ordc v$ by
$u >_{\rm deg-lex} v$, that is, $$u \ordc v \, \text{ if }\, ({\rm deg}(u), |u|,  u_1, \ldots, u_m) >({\rm deg}(v), |v|,  v_1, \ldots, v_n) \, \text{ lexicographically}.$$
Here notice that ${\rm deg}(u)= |u|$ and ${\rm deg}(v)= |v|$.
For the induction step, if $u = \lc u'\rc$ and $v = \lc v' \rc$, then define
$u\ordc v \,\text{ if }\, u' \ordc v'.$
Otherwise, define
\vspace{-.1cm}
$$u\ordc v \, \text{ if }\, ({\rm deg}(u), |u|,  u_1, \ldots, u_m) >({\rm deg}(v), |v|,  v_1, \ldots, v_n) \, \text{ lexicographically}.$$
Then $\ordqc$ is a monomial order~\cite{QC} and is invariant.

\noindent
{\bf Second invariant monomial order $\geq_{\rm dt}$ on $\plie{X}$:}
  Let $(X, \geq)$ be a well-ordered set.  Denote by $\deg_X(u)$ the
number of $x \in X$ in $u$ with repetition. Define the order $\geq_{\rm dt}$ on $\plie{X}$ as follows. For any $u=u_1\cdots u_m$ and $v=v_1\cdots v_n$, where $u_i$ and $v_j$ are prime. Define
$u>_{\rm dt} v $ inductively on $\dep(u)+\dep(v)\geq 0$.
For the initial step of $\dep(u)+\dep(v) = 0$, we have $u,v\in S(X)$ and define $u>_{\rm dt} v$ if $u>_{\rm deg-lex} v$, that is
$$(\deg_X(u), u_1, \ldots, u_m) > (\deg_X(v), v_1, \ldots, v_n) \, \text{ lexicographically}.$$
For the induction step, if $u = \lc u'\rc$ and $v = \lc v' \rc$, define $u>_{\rm dt} v$ if ${u'} >_{\rm dt} {v'}.$
If $u = \lc u'\rc$ and $v\in X$, define $u >_{\rm dt} v$.
Otherwise, define
$$u>_{\rm dt} v \, \text{ if }\, (\deg_X(u),  u_1, \ldots, u_m) >(\deg_X(v),  v_1, \ldots, v_n) \, \text{ lexicographically}.$$
Then $\ordt$ is a monomial order~\cite{GSZ}, which is also invariant.

\subsection{OLPIs of operated degree 1}\mlabel{se:d1}
In this subsection, we proved that each OLPI in Eq.~\eqref{ea:d1} is \GS with respect to the invariant monomial order $\ordt$.

\begin{remark}\mlabel{re:d1g}
Notice that there are three monomial OLPIs
$$\blw{[xy]}, \,[\blw{x}y] \,\text{ and }\, [x\blw{y}],$$
which are \GS with respect to the invariant monomial order $\ordt$, respectively.
\end{remark}

\begin{proposition}\mlabel{pr:d1g}
Let $X$ be a well-ordered set.
The OLPIs
\allowdisplaybreaks
\begin{align*}
\phi(x,y):=~&\blw{[xy]} -[\blw{x}y] - [x\blw{y}],\\
\psi(x,y):=~&\blw{[xy]} - [x\blw{y}],\\
\varphi(x,y):=~&\blw{[xy]} - [\blw{x}y]
\end{align*}
are respectively \GS with respect to the invariant monomial order $\ordt$.
\end{proposition}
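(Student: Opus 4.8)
The plan is to show that each of the three operated Lie polynomials $\phi,\psi,\varphi$ generates (under the substitution construction $S_\Phi$) a Gr\"obner-Shirshov set by verifying triviality of all compositions via the Composition-Diamond lemma (Lemma~\ref{lem:cd}). First I would fix the invariant monomial order $\ordt$ and compute the leading monomial of each polynomial. For $\psi(x,y)=\blw{[xy]}-[x\blw{y}]$, since $\deg_X$ is equal for both terms and the order descends into brackets, I expect $\lbar{[\psi]}=[x\blw{y}]$ or $\blw{[xy]}$ depending on how $\ordt$ compares $\blw{[xy]}$ with $[x\blw{y}]$ at the outermost prime level; the depth-then-lex rule should make this determination routine but must be done explicitly. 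The analogous computation for $\varphi$ yields leading monomial involving $[\blw{x}y]$, and for $\phi$ the three-term identity will have its leading monomial among $\blw{[xy]},[\blw{x}y],[x\blw{y}]$. Getting these leading monomials correct is the foundation for everything that follows.

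Next I would form the set $S_\Phi=\{\phi(u,v)\mid u,v\in\coplie{X}\}$ (and similarly for $\psi,\varphi$) by substituting arbitrary operated Lie polynomials $u,v$ for $x,y$, and then enumerate the possible compositions. Because each of these OLPIs has operated degree $1$ and breadth controlled by a single bracket $\blw{\phantom{x}}$ wrapping a Lie bracket $[xy]$, the overlaps between two instances $\phi(u_1,v_1)$ and $\phi(u_2,v_2)$ are limited: the leading monomial is of the form $\blw{\,\cdot\,}$ or $[\,\cdot\,]$, and an including composition $w=\lbar{[\phi(u_1,v_1)]}=q\suba{\lbar{[\phi(u_2,v_2)]}}$ can only occur when the leading monomial of one instance sits inside the argument of the other. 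Intersection compositions, which require $\max\{|\lbar f|,|\lbar g|\}<|w|<|\lbar f|+|\lbar g|$, should be impossible or trivial here because the leading monomials are single prime elements (breadth one), so there is no room for a proper nontrivial overlap. I would state this breadth-one observation as the mechanism that rules out intersection compositions outright.

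The main work, and the expected main obstacle, is verifying that every including composition is trivial modulo $(S_\Phi,w)$. For this I would take the candidate including composition $\langle f,g\rangle^q_w=f-[q\suba{g}]_{\lbar{[g]}}$ where $f=\phi(u_1,v_1)$ and $g=\phi(u_2,v_2)$, and rewrite it using the defining relation of $\phi$ itself: substituting the lower-order terms back in, the difference should collapse to a $\bfk$-combination $\sum_i\alpha_i[q_i\suba{s_i}]_{\lbar{[s_i]}}$ with $w>q_i\suba{\lbar{[s_i]}}$. Here Lemma~\ref{lem:jeq} is the key tool: it guarantees that the special normal $s$-word $[q\suba{s}]_{\lbar{[s]}}$ equals $q\suba{[s]}$ up to lower terms, so that bracket-manipulations of the Jacobi-type expressions stay controlled and the leading terms cancel as designed. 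The subtlety, flagged already in the introduction, is that the non-associative Lyndon-Shirshov bracketed words make the reassembly of brackets after substitution delicate: one must confirm that $q\suba{g}$ re-expressed in the $\nlsbw{X}$ basis produces exactly the telescoping cancellation predicted by the identity defining $\phi$.

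I expect the three cases to be structurally parallel: $\psi$ and $\varphi$ are the ``one-sided'' degenerations of the ``two-sided'' $\phi$ (dropping $[\blw{x}y]$ or $[x\blw{y}]$ respectively), so once the composition analysis for $\phi$ is carried out, the arguments for $\psi$ and $\varphi$ follow by deleting the corresponding term and repeating the leading-monomial bookkeeping. The hardest part will be the explicit bracket rearrangements in the including-composition triviality check for $\phi$, where all three summands are active and the Shirshov standard bracketing must be tracked through each substitution; I would organize this by first reducing to the case where $u_1,v_1,u_2,v_2$ are themselves Lyndon-Shirshov bracketed words, then applying Lemma~\ref{lem:jeq} termwise to push everything below $w$ in the order $\ordt$.
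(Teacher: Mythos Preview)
Your overall framework---compute leading monomials under $\ordt$, rule out intersection compositions by a breadth argument, then check including compositions via Lemma~\ref{lem:jeq}---matches the paper's. Two corrections are needed.

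First, the leading monomial must be pinned down before anything else works. Under $\ordt$ one has $\lbar{\psi(u,v)}=\blw{uv}$ (and likewise for $\phi,\varphi$): both candidate monomials have the same $\deg_X$, and comparing first primes the rule ``$\blw{u'}>_{\rm dt}x$ for $x\in X$'' forces $\blw{uv}>_{\rm dt}u\blw{v}$. Your breadth-one observation excluding intersection compositions is valid only once this is established; had the leading monomial been $u\blw{v}$ (breadth two), that argument would collapse and genuine intersection ambiguities would appear.

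Second, and more seriously, your enumeration of including compositions is incomplete. You describe them as arising ``when the leading monomial of one instance sits inside the argument of the other,'' which reads as proper nesting with $q\neq\star$. This misses the decisive case $q=\star$: whenever $uvw\in\alsbwo{X}{\ordt}$, the two \emph{distinct} instances $\psi(uv,w)$ and $\psi(u,vw)$ share the \emph{same} leading monomial $\blw{uvw}$, yielding the composition
\[
\langle\psi(uv,w),\psi(u,vw)\rangle_{\blw{uvw}}=\psi(uv,w)-\psi(u,vw).
\]
This is precisely the composition whose triviality is not a formal consequence of Lemma~\ref{lem:jeq}: one must invoke the Jacobi identity inside the bracket to rewrite $\blw{[(uv)w]}-\blw{[u(vw)]}=\blw{[(uw)v]}$, and then reduce again via $\psi$ to reach zero. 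The two properly-nested including compositions you do anticipate are, by contrast, mechanical applications of Lemma~\ref{lem:jeq}. The paper's proof handles exactly these three compositions for $\psi$, and the $q=\star$ one is the substantive step where the Lie structure actually enters. (For $\phi$ the paper simply cites \cite[Theorem~4.8]{ZGG}, since $\phi$ is the differential-type identity already shown \GS there; you need not redo that case.)
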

\begin{proof}
The OLPI $\phi(x,y)$ is of differential type,  which is \GS with respect to the invariant monomial order $\ordt$~\cite[Theorem 4.8]{ZGG}.
We only prove that OLPI $\psi(x,y)$ is \GS, as the case of $\varphi(x,y)$ is similar.

Notice that
$$S_{\psi}=\{\psi(u,v)\,|\,u,v\in \alsbwo{X}{\ordt} \}=\{\blw{[uv]} - [u\blw{v}], [u\blw{u}]\,|\,u\ordtl v\in \alsbwo{X}{\ordt}\}$$
and
$\lbar{\psi(u,v)}=\blw{uv}$ for $u\ordtl v\in \alsbwo{Z}{\ordt}$.
Since the compositions  involving $[u\blw{u}]$ are trivial, it suffices to consider the compositions  involving $\blw{[uv]} - [u\blw{v}]$ with $u\ordtl v$.
These compositions are listed as below:
\allowdisplaybreaks
\begin{eqnarray*}
&&\langle \psi(uv,w), \psi(u,vw) \rangle_{w_1}\, \text{ for }\,  w_1=\blw{uvw}  \in \alsbwo{X}{\ordt},\\
&&\langle \psi(q\suba{\blw{uv}},w), \psi(u,v)) \rangle_{w_2}\,  \text{ for }\, w_2=\blw{q\suba{\blw{uv}}w} \in \alsbwo{X}{\ordt}, q\in\sopm{X},\\
&&\langle \psi(u,q\suba{\blw{vw}}), \psi(v,w))\rangle_{w_3}\,  \text{ for }\, w_3=\blw{uq\suba{\blw{vw}}} \in \alsbwo{X}{\ordt}, q\in\sopm{X}.
\end{eqnarray*}
Now we check that all the compositions are trivial. By direct computation, for the first one, we have
\allowdisplaybreaks
\begin{eqnarray*}
&&\langle \psi(uv,w), \psi(u,vw) \rangle_{w_1}\\
&=&\psi(uv,w)-\psi(u,vw)\\
&=&\blw{[(uv)w]} - [uv\blw{w}]-\blw{[u(vw)]}+[u\blw{vw}]\\
&=&\blw{[(uw)v]} - [uv\blw{w}]+[u\blw{vw}]\\
&\equiv&[uw\blw{v}] - [uv\blw{w}]+[u\blw{vw}]\\
&=&0 \mod(S_\psi, w_1).
\end{eqnarray*}
For the second one, by Lemma~\mref{lem:jeq}, we have
$$\blw{[q\suba{\psi(u,v)}w]}_{\blw{uv}}=\blw{[q\suba{\blw{uv}}w]} - \blw{[q\suba{u\blw{v}}w]}+\sum_i\alpha_i\big(\blw{[q_i\suba{\blw{uv}}w]} - \blw{[q_i\suba{u\blw{v}}w]}\big)$$
for some $\alpha_i\in\bfk$, $q_i\in\sopm{X}$ with $q\suba{\blw{uv}}\ordtl q_i\suba{\blw{uv}}$.
Hence
\allowdisplaybreaks
\begin{eqnarray*}
&&\langle \psi(q\suba{\blw{uv}},w), \psi(u,v)) \rangle_{w_2}\\
&=&\psi(q\suba{\blw{uv}},w)-\blw{[q\suba{\psi(u,v)}w]}_{\blw{uv}}\\
&=&\blw{[q\suba{\blw{uv}}w]} - [q\suba{\blw{uv}}\blw{w}]-\blw{[q\suba{\blw{uv}}w]} + \blw{[q\suba{u\blw{v}}w]}-\sum_i\alpha_i\big(\blw{[q_i\suba{\blw{uv}}w]} - \blw{[q_i\suba{u\blw{v}}w]}\big)\\
&=&- [q\suba{\blw{uv}}\blw{w}]+ \blw{[q\suba{u\blw{v}}w]}-\sum_i\alpha_i\big(\blw{[q_i\suba{\blw{uv}}w]} - \blw{[q_i\suba{u\blw{v}}w]}\big)\\
&\equiv&- [q\suba{u\blw{v}}\blw{w}]+ \blw{[q\suba{u\blw{v}}w]}-\sum_i\alpha_i\big(\blw{[q_i\suba{u\blw{v}}w]} - \blw{[q_i\suba{u\blw{v}}w]}\big)\\
&=&0 \mod(S_\psi, w_2).
\end{eqnarray*}
The third one is similar to the case of second one.
Therefore, $\psi(x,y)$ is \GS.
\end{proof}

Now we arrive at our first main result of this paper.
\begin{theorem}\mlabel{tm:d1}
Let $X$ be a well-ordered set. Six OLPIs in Eq.~\eqref{ea:d1}
are respectively \GS with respect to the invariant monomial order $\ordt$.
\end{theorem}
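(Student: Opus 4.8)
The plan is to reduce Theorem~\mref{tm:d1} to the two results that immediately precede it, after observing that the six OLPIs in Eq.~\eqref{ea:d1} split naturally into two groups according to whether or not they are monomials.

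First I would dispatch the three monomial identities $\blw{[xy]}$, $[\blw{x}y]$ and $[x\blw{y}]$. For each of these the associated system $S_\Phi$ consists entirely of monomials, and a monomial system is automatically \GS: every intersection and every including composition of monomials is already a $\bfk$-combination of special normal $s$-words lying strictly below $w$, hence is trivial modulo $(S_\Phi,w)$. This is exactly the content of Remark~\mref{re:d1g}, which I would invoke directly.

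Second, the three remaining identities are precisely
\[
\phi(x,y)=\blw{[xy]}-[\blw{x}y]-[x\blw{y}],\qquad
\psi(x,y)=\blw{[xy]}-[x\blw{y}],\qquad
\varphi(x,y)=\blw{[xy]}-[\blw{x}y],
\]
and these are handled by Proposition~\mref{pr:d1g}: $\phi$ is of differential type and is \GS by~\cite[Theorem 4.8]{ZGG}, while $\psi$ and $\varphi$ are verified there by checking that all intersection and including compositions of their generating elements are trivial. Assembling the two groups then yields the theorem.

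The theorem is thus a routine combination, so the genuine work --- and the main obstacle --- lies inside Proposition~\mref{pr:d1g} rather than in the assembly. The delicate step is the including compositions: one cannot merely substitute, but must expand $\blw{[q\suba{\psi(u,v)}w]}_{\blw{uv}}$ using the Jacobi-type identity of Lemma~\mref{lem:jeq}, which produces a tail $\sum_i \alpha_i\big(\blw{[q_i\suba{\blw{uv}}w]}-\blw{[q_i\suba{u\blw{v}}w]}\big)$ with $q\suba{\blw{uv}}\ordtl q_i\suba{\blw{uv}}$. One must then rewrite $\blw{uv}$ to $u\blw{v}$ throughout via $S_\psi$ and check that these tail terms cancel in pairs, so that the whole composition collapses to $0$ modulo $(S_\psi,w)$. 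A secondary prerequisite is confirming that $\lbar{\psi(u,v)}=\blw{uv}$ under $\ordt$ --- that is, that the order ranks the ``capped'' word $\blw{uv}$ above the ``split'' word $u\blw{v}$ --- for which the precise definition of $\ordt$ (comparison first by $\deg_X$ and then by the prime decomposition) is needed.
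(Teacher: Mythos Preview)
Your proposal is correct and matches the paper's proof exactly: the paper's argument for Theorem~\mref{tm:d1} is the single sentence ``It follows from Remark~\mref{re:d1g} and Proposition~\mref{pr:d1g},'' which is precisely your two-group split into the three monomial OLPIs (handled by Remark~\mref{re:d1g}) and the three non-monomial OLPIs $\phi,\psi,\varphi$ (handled by Proposition~\mref{pr:d1g}). Your additional commentary on the mechanics inside Proposition~\mref{pr:d1g} --- the use of Lemma~\mref{lem:jeq} to expand the including compositions and the cancellation of the tail terms --- also accurately reflects what the paper does there.
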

\begin{proof}
It follows from Remark~\mref{re:d1g} and Proposition~\mref{pr:d1g}.
\end{proof}

\subsection{OLPIs of operated degree 2}\mlabel{se:d2}
In this subsection, we proved that each OLPI in Eqs.~\eqref{ea:d2} and~\eqref{ea:d3} is \GS with respect to the invariant monomial orders $\ordt$ or $\ordqc$.
Parallel to Theorem~\mref{tm:d1}, we give the following result.

\begin{theorem}\mlabel{tm:d2}
Let $X$ be a well-ordered set. Six OLPIs in Eq.~\eqref{ea:d2}
are respectively \GS with respect to the invariant monomial order $\ordt$.
\end{theorem}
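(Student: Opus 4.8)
The plan is to mirror the proof of Theorem~\mref{tm:d1}, exploiting the structural fact that the six OLPIs in Eq.~\eqref{ea:d2} are obtained from those in Eq.~\eqref{ea:d1} by replacing each single operator $\blw{\cdot}$ with the double operator $\blw{\blw{\cdot}}$. First I would dispose of the three monomial identities $\blw{\blw{[xy]}}$, $[\blw{\blw{x}}y]$ and $[x\blw{\blw{y}}]$: since each is a single monomial, every intersection and including composition built on it collapses to zero trivially, so each is \GS with respect to $\ordt$. This is the exact analogue of Remark~\mref{re:d1g}.

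For the remaining three identities I would set up a statement parallel to Proposition~\mref{pr:d1g}, treating
$$\phi'(x,y):=\blw{\blw{[xy]}}-[\blw{\blw{x}}y]-[x\blw{\blw{y}}],\quad \psi'(x,y):=\blw{\blw{[xy]}}-[x\blw{\blw{y}}],\quad \varphi'(x,y):=\blw{\blw{[xy]}}-[\blw{\blw{x}}y].$$
The first step is to pin down the leading monomials. Under $\ordt$ one compares the associative word $\blw{\blw{uv}}$ (a single prime) against $u\blw{\blw{v}}$ (resp. $\blw{\blw{u}}v$): the $\deg_X$-values agree, and on the first prime a bracketed word dominates a generator while $uv\ordtl u$ by degree, so in each case $\lbar{\clie{\psi'(u,v)}}=\blw{\blw{uv}}$ for $u\ordtl v\in\alsbwo{X}{\ordt}$, and likewise for $\varphi'$ and $\phi'$. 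This is the same computation as in the degree-1 case carrying one extra bracket layer, and it confirms that
$$S_{\psi'}=\{\blw{\blw{[uv]}}-[u\blw{\blw{v}}],\ [u\blw{\blw{u}}]\mid u\ordtl v\in\alsbwo{X}{\ordt}\}$$
has exactly the shape of $S_\psi$ after the substitution $\blw{\cdot}\mapsto\blw{\blw{\cdot}}$.

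I would then enumerate the compositions precisely as in Proposition~\mref{pr:d1g}. The compositions involving the monomial $[u\blw{\blw{u}}]$ are trivial, and those involving $\blw{\blw{[uv]}}-[u\blw{\blw{v}}]$ fall into the same three families: one arising from the two bracketings of $\blw{\blw{uvw}}$, and two including compositions in which $\blw{\blw{uv}}$ sits inside a $\star$-word $q\in\sopm{X}$, where Lemma~\mref{lem:jeq} is invoked to expand the special normal word with leading part $\blw{\blw{uv}}$. Each composition reduces to zero by the same cancellation as in the degree-1 proof, now transported through the double bracket; the case of $\varphi'$ is symmetric, and $\phi'$ (the degree-2 differential-type identity) is handled by the same direct computation, now with the one extra term, since it is not immediately subsumed by the cited differential-type result used for its degree-1 counterpart.

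The main obstacle I anticipate is bookkeeping rather than conceptual: verifying that the Shirshov standard bracketing and the order $\ordt$ genuinely transfer through the additional bracket layer (so that leading terms remain $\blw{\blw{uv}}$ and the Lyndon-Shirshov condition $u\ordtl v$ is preserved), and that in the including compositions the double-bracketed leading term still closes up against an arbitrary surrounding $\star$-word. Since $\ordt$ is defined by recursion on depth and a double bracket uniformly raises depth by two, I expect these checks to go through verbatim, so that Theorem~\mref{tm:d2} follows from the degree-1 template of Remark~\mref{re:d1g} and Proposition~\mref{pr:d1g}.
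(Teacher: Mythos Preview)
Your proposal is correct and follows essentially the same approach as the paper: the paper's own proof simply observes that the three monomial OLPIs are trivially \GS and that the remaining three follow by replacing $\blw{~}$ with $\blw{\blw{~}}$ throughout the proof of Proposition~\mref{pr:d1g}. You are in fact more careful than the paper in one respect: you flag that the degree-1 identity $\phi(x,y)$ was handled in Proposition~\mref{pr:d1g} by citation to~\cite[Theorem 4.8]{ZGG} rather than by an explicit computation, so the ``replace $\blw{~}$ by $\blw{\blw{~}}$'' recipe for $\phi'$ requires actually running the composition argument, which the paper glosses over.
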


\begin{proof}
First, three monomial OLPIs in Eq.~\eqref{ea:d2} are \GS, respectively.
Next, other three OPIs in Eq.~\eqref{ea:d2} are respectively \GS,
by replacing $\blw{~}$ by $\blw{\blw{~}}$ in the proof of Proposition~\mref{pr:d1g}.
\end{proof}

We are in a position to prove that each OLPI in Eq.~\eqref{ea:d3} is \GS with respect to the invariant monomial orders $\ordt$ or $\ordqc$.
As a preparation, we expose the following result.

\begin{proposition}\mlabel{pr:rb}
Let $X$ be a well-ordered set. The OLPIs
\allowdisplaybreaks
  \begin{equation*}
\begin{aligned}
&[\blw{x}\blw{y}] - \blw{[\blw{x}y]} - \blw{[x \blw{y}]}&&\text{\rm (Rota-Baxter)},\\
&[\blw{x}\blw{y}]- \blw{[\blw{x}y]} - \blw{[x \blw{y}]} + \blw{\blw{[x y]}}&&\text{\rm(Nijenhuis)},\\
&[\blw{x}\blw{y}] - \blw{[x \blw{y}]} &&\text{\rm(average)},\\
&[\blw{x}\blw{y}] - \blw{[\blw{x}y]} &&\text{\rm(inverse average)}
\end{aligned}
\end{equation*}
are respectively \GS with respect to the invariant monomial order $\ordqc$.
\end{proposition}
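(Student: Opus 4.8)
The plan is to follow the scheme of Proposition~\mref{pr:d1g}, now adapted to leading monomials of breadth~$2$. First I would pin down the leading monomial of $\phi(u,v)$ for each of the four identities. In every case each summand has the same degree $\deg(u)+\deg(v)+2$, while the summand $[\blw{u}\blw{v}]$ has breadth $2$ and every other summand (being of the form $\blw{[\,\cdots]}$ or $\blw{\blw{[\,\cdots]}}$) has breadth $1$; since $\ordqc$ compares first by degree and then by breadth, the leading monomial is $\lbar{\phi(u,v)}=\blw{u}\blw{v}$. This is precisely why $\ordqc$, and not $\ordt$, must be used: under $\ordt$ the degree counts only the letters of $X$, so all summands tie, and the bracketed summands $\blw{[\blw{u}v]}$, $\blw{[u\blw{v}]}$ would dominate $[\blw{u}\blw{v}]$. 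Requiring $\blw{u}\blw{v}\in\alsbwo{X}{\ordqc}$ forces $\blw{u}\succ\blw{v}$, so
$$S_\phi=\{\phi(u,v)\mid u,v\in\alsbwo{X}{\ordqc},\ \blw{u}\succ\blw{v}\},$$
the diagonal elements $[\blw{u}\blw{u}]=0$ being trivial.

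Next I would enumerate the nontrivial compositions. Because every leading monomial $\blw{u}\blw{v}$ has breadth $2$, an intersection composition needs an overlap word $w$ with $2<|w|<4$, hence $|w|=3$, which forces the two leading monomials to share a single prime:
$$\langle\phi(u,v),\phi(v,w')\rangle_{w},\qquad w=\blw{u}\blw{v}\blw{w'},\ \ \blw{u}\succ\blw{v}\succ\blw{w'}.$$
An including composition occurs exactly when the leading monomial $\blw{u'}\blw{v'}$ of one relation sits as a sub-bracketed-word inside an argument of another: either $u=q''\suba{\blw{u'}\blw{v'}}$ (with $q=\blw{q''}\blw{v}$) or $v=q''\suba{\blw{u'}\blw{v'}}$ (with $q=\blw{u}\blw{q''}$), for some $q''\in\sopm{X}$. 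Since all four identities share the same leading monomial, this list of compositions is common to all of them.

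For the including compositions I would reduce as in the second composition of Proposition~\mref{pr:d1g}: apply Lemma~\mref{lem:jeq} to pull the inner substitution of $\phi$ through the bracketing, then apply the rewriting rule of $\phi$ (for Rota--Baxter, $[\blw{u}\blw{v}]\mapsto\blw{[\blw{u}v]}+\blw{[u\blw{v}]}$, and the evident analogues for the other three) to both the outer and the inner occurrences. The leading terms cancel and the remainder has leading monomial strictly below $w$, so the composition is trivial modulo $(S_\phi,w)$; the bookkeeping here mirrors the associative reductions of~\cite{WZG}.

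The main obstacle is the intersection composition. Here I would compute $[\phi(u,v)\blw{w'}]_{\blw{u}\blw{v}}-[\blw{u}\,\phi(v,w')]_{\blw{v}\blw{w'}}$, where already the two special normal words are built from \emph{different} bracketings of $w$: the first uses the non-standard $((\blw{u}\blw{v})\blw{w'})$ (so that $\lbar{\phi(u,v)}=\blw{u}\blw{v}$ appears as a unit), whereas the Shirshov-standard bracketing of $w$ used by the second is $(\blw{u}(\blw{v}\blw{w'}))$. Reconciling them through the Jacobi rearrangement $[(ab)c]=[a(bc)]+[(ac)b]$ (as in Proposition~\mref{pr:d1g}) already introduces the correction term $[(\blw{u}\blw{w'})\blw{v}]$, which has no associative analogue. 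After the leading monomial $w$ cancels and the rule of $\phi$ is applied to every occurrence, one must verify that the remaining nested-operator Lie terms—together with this Jacobi correction—cancel in pairs modulo $(S_\phi,w)$. I expect this pairwise cancellation to be the crux for the Rota--Baxter and Nijenhuis identities, the latter being heaviest since its extra summand $\blw{\blw{[uv]}}$ roughly doubles the monomials to track; the average and inverse-average identities, having a single lower-order term, should reduce with considerably less effort.
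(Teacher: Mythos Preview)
Your proposal is correct and mirrors the paper's approach exactly for the cases the paper actually computes: you identify the leading monomial $\blw{u}\blw{v}$ under $\ordqc$, list the same intersection composition $\langle\phi(u,v),\phi(v,w')\rangle_{\blw{u}\blw{v}\blw{w'}}$ and the two including compositions with $q''\in\sopm{X}$ inside either argument, and reduce via the Jacobi rearrangement together with Lemma~\mref{lem:jeq}. The one difference is scope: the paper does not carry out the Rota--Baxter or Nijenhuis computations at all but simply cites~\mcite{QC,ZGG} for those two, and then verifies only the average identity in detail (declaring the inverse-average case similar). So the heavy cancellation you anticipate for Rota--Baxter and especially Nijenhuis is real, but the paper sidesteps it entirely by appeal to prior work; your plan would reproduce those known results by direct calculation, which is more self-contained but also more laborious than what the paper does.
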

\begin{proof}
The Rota-Baxter OLPI and Nijenhuis OLPI are respectively \GS with respect to the invariant monomial order $\ordqc$~\mcite{QC,ZGG}.
We only need to show that average OLPI is \GS,  as the case of inverse average OLPI is similar.

Denote $\phi(x,y) :=[\blw{x}\blw{y}] - \blw{[x \blw{y}]}$. Then
$$S_{\phi}=\{\phi(u,v)\,|\,u,v\in \alsbwo{X}{\ordqc} \}=\{[\blw{u}\blw{v}] - \blw{[u \blw{v}]}, \blw{[u \blw{u}]}\,|\,u\ordc v\in \alsbwo{X}{\ordqc}\}$$
and
$\lbar{\phi(u,v)}=\blw{u}\blw{v}$ for $u\ordc v\in \alsbwo{Z}{\ordqc}$.
Since the compositions  involving $\blw{[u \blw{u}]}$ are trivial, we are left to consider  the compositions  involving $\blw{[uv]} - [u\blw{v}]$ with $u\ordc v$.
These compositions are listed as below:
\begin{eqnarray*}
&&\langle\phi(u,v), \phi(v,w)\rangle_{w_1}\,\text{ for }\,  w_1=\blw{u}\blw{v}\blw{w}, u\ordc v \ordc w,\\
&&\langle\phi(q\suba{\blw{u}\blw{v}},w), \phi(u,v)\rangle_{w_2}\,\text{ for }\,  w_2=\blw{q\suba{\blw{u}\blw{v}}}\blw{w}, u\ordc v , q\suba{\blw{u}\blw{v}}\ordc w,\\
&&\langle\phi(u,q\suba{\blw{v}\blw{w}}), \phi(v,w)\rangle_{w_3}\,\text{ for }\,  w_3=\blw{u}\blw{q\suba{\blw{v}\blw{w}}}, u\ordc\blw{q\suba{\blw{u}\blw{v}}}, v\ordc w .
\end{eqnarray*}
The first composition is trivial from that
\allowdisplaybreaks
\begin{eqnarray*}
&&\langle\phi(u,v), \phi(v,w)\rangle_{w_1}\\
&=& [\phi(u,v)\blw{w}]\suba{\blw{u}\blw{v}}-[\blw{u}\phi(v,w)]\suba{\blw{v}\blw{w}}\\
&=& [[\blw{u}\blw{w}]\blw{v}] +[\blw{u}[\blw{v}\blw{w}] ]- [\blw{[u \blw{v}]}\blw{w}]-[\blw{u}[\blw{v}\blw{w}] ]+ [\blw{u}\blw{[v \blw{w}]}]\\
&=& [[\blw{u}\blw{w}]\blw{v}] - [\blw{[u \blw{v}]}\blw{w}]+ [\blw{u}\blw{[v \blw{w}]}]\\
&\equiv& [\blw{[u\blw{w}]}\blw{v}]-\blw{[[u\blw{v}]\blw{w}]}+\blw{[u\blw{[v\blw{w}]}]}\\
&\equiv& \blw{[[u\blw{w}]\blw{v}]}-\blw{[[u\blw{v}]\blw{w}]}+\blw{[u\blw{[v\blw{w}]}]}\\
&=& -\blw{[u[\blw{v}\blw{w}]]}+\blw{[u\blw{[v\blw{w}]}]}\\
&\equiv& -\blw{[u\blw{[v\blw{w}]}]}+\blw{[u\blw{[v\blw{w}]}]}\\
&=&0 \mod(S_\phi, w_1).
\end{eqnarray*}
For the second one, by Lemma~\mref{lem:jeq}, we have
$$[\blw{q\suba{\phi(u,v)}}\blw{w}]_{\blw{u}\blw{v}}=[\blw{q\suba{\blw{u}\blw{v}}}\blw{w}] -[\blw{q\suba{\blw{u\blw{v}}}}\blw{w}] +\sum_i\alpha_i\big([\blw{q_i\suba{\blw{u}\blw{v}}}\blw{w}] -[\blw{q_i\suba{\blw{u\blw{v}}}}\blw{w}] \big),$$
for some $\alpha_i\in\bfk$, $q_i\in\sopm{X}$ with $q\suba{\blw{u}\blw{v}}\ordc q_i\suba{\blw{u}\blw{v}}$, and so
\allowdisplaybreaks
\begin{eqnarray*}
&&\langle\phi(q\suba{\blw{u}\blw{v}},w), \phi(u,v)\rangle_{w_2}\\
&=&\phi(q\suba{\blw{u}\blw{v}},w)-[\blw{q\suba{\phi(u,v)}}\blw{w}]_{\blw{u}\blw{v}}\\
&=&[\blw{q\suba{\blw{u}\blw{v}}}\blw{w}] - \blw{[q\suba{\blw{u}\blw{v}} \blw{w}]}-[\blw{q\suba{\blw{u}\blw{v}}}\blw{w}] +[\blw{q\suba{\blw{u\blw{v}}}}\blw{w}]\\
&&-\sum_i\alpha_i\big([\blw{q_i\suba{\blw{u}\blw{v}}}\blw{w}] -[\blw{q_i\suba{\blw{u\blw{v}}}}\blw{w}] \big)\\
&=& - \blw{[q\suba{\blw{u}\blw{v}} \blw{w}]} +[\blw{q\suba{\blw{u\blw{v}}}}\blw{w}]-\sum_i\alpha_i\big([\blw{q_i\suba{\blw{u}\blw{v}}}\blw{w}] -[\blw{q_i\suba{\blw{u\blw{v}}}}\blw{w}] \big)\\
&=& - [\blw{q\suba{\blw{u\blw{v}}}}\blw{w}] +[\blw{q\suba{\blw{u\blw{v}}}}\blw{w}]-\sum_i\alpha_i\big([\blw{q_i\suba{\blw{u\blw{v}}}}\blw{w}] -[\blw{q_i\suba{\blw{u\blw{v}}}}\blw{w}] \big)\\
&=&0 \mod(S_\phi,w_2).
\end{eqnarray*}
Similarly, the third composition is also trivial module $(S_\phi, w_3)$. Therefore, the average OLPI $\phi(x,y)$ is \GS. This completes the proof.
\end{proof}

\begin{remark}
With the invariant monomial order $\ordt$, the A, B and C type OLPIs  are not \GS, respectively. Since they have same leading monomial $\blw{\blw{uv}}$, there exist a composition of the form
$\langle \phi(uv,w), \phi(u, vw)\rangle_{\blw{\blw{uvw}}}$ with $uvw\in\alsbwo{X}{\ordt}$. However, such composition is not trivial module $(S_\phi, \blw{\blw{uvw}})$.
\end{remark}
We now check that the OLPIs associated to the new identities $A$  classified in~\mcite{BE} are  respectively \GS.
\begin{proposition}\mlabel{pr:a}
Let $X$ be a well-ordered set. The OLPIs
\allowdisplaybreaks
\begin{equation*}
  \begin{aligned}
    &[x \blw{\blw{y}}]+\blw{\blw{[x y] }} + \blw{[x \blw{y }]}  &&\text{\rm(New identity A (right))},\\
&[\blw{\blw{x}}y]+\blw{\blw{[x y]} }+ \blw{[\blw{x}y]}  &&\text{\rm(New identity A (left))}
  \end{aligned}
\end{equation*}
are respectively \GS with respect to the invariant monomial order $\ordqc$.
\end{proposition}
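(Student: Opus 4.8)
The plan is to mirror the proof of Proposition~\mref{pr:rb}, using first that the two identities are interchangeable. Writing $\phi_R(x,y):=[x\blw{\blw{y}}]+\blw{\blw{[xy]}}+\blw{[x\blw{y}]}$ and $\phi_L(x,y):=[\blw{\blw{x}}y]+\blw{\blw{[xy]}}+\blw{[\blw{x}y]}$, the Lie antisymmetry yields $\phi_L(y,x)=-\phi_R(x,y)$, whence $\phi_L$ is \GS if and only if $\phi_R$ is, and it suffices to treat $\phi:=\phi_R$. Among the three summands of $\phi$ only $[x\blw{\blw{y}}]$ is a bracket of two primes, so it alone has breadth $2$ while the others have breadth $1$; since $\ordqc$ ranks breadth immediately after degree and all three summands have degree $4$, the leading monomial of $\phi(u,v)$ is the Lyndon--Shirshov reordering of $u\blw{\blw{v}}$. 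In the representative case $\blw{\blw{v}}\succ u$ this is $\lbar{\phi(u,v)}=\blw{\blw{v}}u$, with rewriting rule $[\blw{\blw{v}}u]\mapsto\blw{\blw{[uv]}}+\blw{[u\blw{v}]}$; the mirror shape $u\succ\blw{\blw{v}}$, which also occurs in $S_\phi$, is handled identically. Accordingly $S_\phi=\{\phi(u,v)\mid u,v\in\alsbwo{X}{\ordqc}\}$, whose degenerate members $\phi(u,u)=[u\blw{\blw{u}}]+\blw{[u\blw{u}]}$ arise because $[uu]=0$ kills the middle term.

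Next I would enumerate the compositions. Because $\lbar{\phi(u,v)}$ has breadth $2$, exactly the three families of Proposition~\mref{pr:rb} occur. There is one intersection composition, arising when the trailing prime of one leading word coincides with the leading doubly-bracketed prime of another; after relabelling it reads
$$\langle\phi(\blw{\blw{y}},x),\,\phi(z,y)\rangle_{w_1},\qquad w_1=\blw{\blw{x}}\blw{\blw{y}}z,\quad \blw{\blw{x}}\succ\blw{\blw{y}}\succ z.$$
There are two including compositions, according to whether the nested leading word sits inside the doubly-bracketed first prime $\blw{\blw{v}}$ or inside the trailing prime $u$; as in the second and third compositions of Proposition~\mref{pr:rb}, each is presented by a $\star$-word $q\in\sopm{X}$ with $\lbar{f}=q\suba{\blw{\blw{v}}u}$. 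The compositions formed from the degenerate generator $[u\blw{\blw{u}}]+\blw{[u\blw{u}]}$ are treated separately, but since its leading monomial $\blw{\blw{u}}u$ has the same shape, they reduce in the same manner.

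To verify triviality I would work termwise. For the intersection composition, form the special normal words $[\phi(\blw{\blw{y}},x)\,z]_{\blw{\blw{x}}\blw{\blw{y}}}$ and $[\blw{\blw{x}}\,\phi(z,y)]_{\blw{\blw{y}}z}$, expand each outer bracket by the Jacobi identity, reorder every resulting bracket into Lyndon--Shirshov form by antisymmetry (the $\equiv$ reductions relative to $\ordqc$), and then apply the rewriting rule $[\blw{\blw{v}}u]\mapsto\blw{\blw{[uv]}}+\blw{[u\blw{v}]}$ at every admissible position; the surviving terms should cancel in pairs, giving $\equiv 0\mod(S_\phi,w_1)$. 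For each including composition I would invoke Lemma~\mref{lem:jeq} to expand the special normal word $[q\suba{\phi(u,v)}]_{\blw{\blw{v}}u}$ as its principal term $q\suba{\phi(u,v)}$ together with a tail $\sum_i\alpha_i\,q_i\suba{\phi(u,v)}$ satisfying $q\suba{\blw{\blw{v}}u}\ordc q_i\suba{\blw{\blw{v}}u}$; the principal term then cancels against the value of $\phi$ at the composite argument, and the tail against its own image, exactly as in Proposition~\mref{pr:rb}.

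The main obstacle will be the intersection composition. Unlike the Rota--Baxter identity, whose leading bracket $[\blw{x}\blw{y}]$ joins two singly-bracketed primes symmetrically, here the leading bracket $[\blw{\blw{v}}u]$ joins a doubly-bracketed prime to a bare one, so the Jacobi expansions produce brackets such as $[[\blw{\blw{x}}\blw{\blw{y}}]z]$ and $[\blw{\blw{x}}[\blw{\blw{y}}z]]$ whose Lyndon--Shirshov reorderings are more intricate and spawn more intermediate terms---precisely the extra-term phenomenon noted in the introduction. Tracking the antisymmetry signs, deciding at which positions the rewriting rule applies, and confirming that every term of the forms $\blw{\blw{[\,\cdot\,]}}$ and $\blw{[\,\cdot\,\blw{\cdot}]}$ cancels, is where the computation must be pushed through carefully. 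A secondary complication, absent from Proposition~\mref{pr:rb}, is that the degenerate generator $[u\blw{\blw{u}}]+\blw{[u\blw{u}]}$ is a genuine two-term relation rather than a single monomial, so its compositions demand their own bookkeeping.
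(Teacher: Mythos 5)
Your skeleton is the same as the paper's: form $S_\phi=\{\phi(u,v)\mid u,v\in\alsbwo{X}{\ordqc}\}$, locate the leading monomial in the unique breadth-$\geq 2$ summand, enumerate compositions, and kill each one with Lemma~\mref{lem:jeq} plus the rewriting rule. Three of your refinements are correct and in fact sharper than the published proof: the reduction $\phi_L(y,x)=-\phi_R(x,y)$, which makes the left identity a formal consequence of the right one (the paper just redoes it ``similarly''); the remark that $\lbar{\phi(u,v)}$ is the Lyndon--Shirshov \emph{reordering} of $u\blw{\blw{v}}$, so that both shapes $u\blw{\blw{v}}$ and $\blw{\blw{v}}u$ occur (the paper writes $\lbar{\phi(u,v)}=u\blw{\blw{v}}$ without comment); and the identification of the degenerate two-term generators $\phi(u,u)=[u\blw{\blw{u}}]+\blw{[u\blw{u}]}$.

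The genuine gap is that the proposal stops exactly where the mathematical content begins: for every composition family the triviality check is narrated (``expand by Jacobi, reorder, rewrite, the surviving terms should cancel in pairs'') and then explicitly deferred, whereas in the paper each listed composition is expanded via Lemma~\mref{lem:jeq} and reduced step by step to an explicit $0\mod(S_\phi,w)$. Since the proposition \emph{is} those cancellations, what you have is a plan, not a proof. The gap is aggravated by your enumeration claim: ``$\lbar{\phi(u,v)}$ has breadth $2$, so exactly the three families of Proposition~\mref{pr:rb} occur'' is false in general, because the first argument $u$ is an arbitrary element of $\alsbwo{X}{\ordqc}$, so $\lbar{\phi(u,v)}$ has breadth $|u|+1$; this produces overlap patterns with no Rota--Baxter analogue (e.g.\ an inner leading word occurring as a top-level segment $q=a\star$ of the outer one that contains the final doubly bracketed prime), none of which appear in your list. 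One comparative point is worth recording: your intersection composition $\langle\phi(\blw{\blw{y}},x),\phi(z,y)\rangle_{w_1}$ with $w_1=\blw{\blw{x}}\blw{\blw{y}}z$ is a legitimate composition under Definition~\mref{definition:compl}, and it is \emph{not} among the two including families that the paper's proof verifies (neither leading word there contains the other), so on this point your enumeration is the more faithful one; but that only increases the computational burden that your write-up leaves entirely undischarged.
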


\begin{proof}
It suffices  to prove that  A (right) type OLPI is \GS,  as the proof of the left case is similar.
Define $\phi(x,y) :=[x \blw{\blw{y}}]+\blw{\blw{[x y] }} + \blw{[x \blw{y }]}$ and
$$S_\phi :=\{\phi(u,v)\,|\,u,v\in\alsbwo{X}{\ordqc} \}.$$
Then $\lbar{\phi(u,v)}=u\blw{\blw{v}}$ and there are two compositions
\begin{eqnarray*}
&&\langle\phi(q\suba{u \blw{\blw{v}}},w), \phi(u,v)\rangle_{w_1} \,\text{ for }\, w_1=q\suba{u \blw{\blw{v}}}\blw{\blw{w}}\in \alsbwo{X}{\ordqc},\\
&&\langle\phi(u, q\suba{v \blw{\blw{w}}}),\phi(v,w)\rangle_{w_2} \,\text{ for }\, w_2=u\blw{\blw{q\suba{v \blw{\blw{w}}}}}\in \alsbwo{X}{\ordqc}.
\end{eqnarray*}
For the first composition, by Lemma~\mref{lem:jeq}, we get
\begin{eqnarray*}
&&[q\suba{\phi(u,v)}\blw{\blw{w}}]_{u\blw{\blw{v}}}\\
&=&[q\suba{u \blw{\blw{v}}} \blw{\blw{w}}]+[q\suba{\blw{\blw{[uv] }}} \blw{\blw{w}}]+ [q\suba{\blw{[u \blw{v }]}} \blw{\blw{w}}]\\
&&+\sum_i\alpha_i\left([q_i\suba{u \blw{\blw{v}}} \blw{\blw{w}}]+[q_i\suba{\blw{\blw{[uv] }}} \blw{\blw{w}}]+ [q_i\suba{\blw{[u \blw{v }]}} \blw{\blw{w}}]\right),
\end{eqnarray*}
for some $\alpha_i\in\bfk$, $q_i\in\sopm{X}$ with $q\suba{u \blw{\blw{v}}}\ordc q_i\suba{u \blw{\blw{v}}}$, and hence
{\small
\begin{eqnarray*}
&&\langle\phi(q\suba{u \blw{\blw{v}}},w), \phi(u,v)\rangle_{w_1}\\
&=&\phi(q\suba{u \blw{\blw{v}}},w)-[q\suba{\phi(u,v)}\blw{\blw{w}}]_{u\blw{\blw{v}}}\\
&=&[q\suba{u \blw{\blw{v}}} \blw{\blw{w}}]+\blw{\blw{[q\suba{u \blw{\blw{v}}} w] }} + \blw{[q\suba{u \blw{\blw{v}}} \blw{w }]}
-[q\suba{u \blw{\blw{v}}} \blw{\blw{w}}]-[q\suba{\blw{\blw{[uv] }}} \blw{\blw{w}}]- [q\suba{\blw{[u \blw{v }]}} \blw{\blw{w}}]\\
&&-\sum_i\alpha_i\left([q_i\suba{u \blw{\blw{v}}} \blw{\blw{w}}]+[q_i\suba{\blw{\blw{[uv] }}} \blw{\blw{w}}]+ [q_i\suba{\blw{[u \blw{v }]}} \blw{\blw{w}}]\right)\\
&=&\blw{\blw{[q\suba{u \blw{\blw{v}}} w] }} + \blw{[q\suba{u \blw{\blw{v}}} \blw{w }]}
-[q\suba{\blw{\blw{[uv] }}} \blw{\blw{w}}]- [q\suba{\blw{[u \blw{v }]}} \blw{\blw{w}}]\\
&&-\sum_i\alpha_i\left([q_i\suba{u \blw{\blw{v}}} \blw{\blw{w}}]+[q_i\suba{\blw{\blw{[uv] }}} \blw{\blw{w}}]+ [q_i\suba{\blw{[u \blw{v }]}} \blw{\blw{w}}]\right)\\
&\equiv&
-\blw{\blw{[q\suba{\blw{[u \blw{v}]}} w] }} -\blw{\blw{[q\suba{\blw{ \blw{[uv]}}} w] }}-\blw{[q\suba{\blw{[u \blw{v}]}} \blw{w }]}-\blw{[q\suba{\blw{\blw{[u v]}}} \blw{w }]}
+[\blw{q\suba{\blw{\blw{[uv] }}} \blw{w}}]+[\blw{\blw{q\suba{\blw{\blw{[uv] }}} w}}]\\
&&+[\blw{q\suba{\blw{[u \blw{v }]}} \blw{w}}]+[\blw{\blw{q\suba{\blw{[u \blw{v }]}} w}}]
-\sum_i\alpha_i\Big(-[q_i\suba{\blw{[u \blw{v}]}} \blw{\blw{w}}]-[q_i\suba{\blw{ \blw{[uv]}}} \blw{\blw{w}}]\\
&&+[q_i\suba{\blw{\blw{[uv] }}} \blw{\blw{w}}]+ [q_i\suba{\blw{[u \blw{v }]}} \blw{\blw{w}}]\Big)\\
&=&0 \mod(S_\phi, w_1).
\end{eqnarray*}}
Similarly, the second  composition is also trivial $(S_\phi, w_2)$. Therefore, the OLPI $\phi(x,y)$ is \GS.
\end{proof}

We turn to consider the  B type OLPIs.

\begin{remark}\mlabel{re:rb}
Notice that if the monomial $\blw{\blw{x y} }$ is leading monomial of the following B type OLPIs
\begin{equation*}
  \begin{aligned}
& \blw{\blw{[x y]} }+ \alpha\blw{[x \blw{y}]}- (\alpha + 1)[x \blw{\blw{y}}] \text{ with } 0\neq \alpha \in\bfk , &&\text{(New identity B (right))},\\
&\blw{\blw{[x y]} } + \beta\blw{[\blw{x}y]} - (\beta + 1)[\blw{\blw{x}}y] \text{ with } 0\neq \beta \in\bfk ,&&\text{(New identity B (left))},
  \end{aligned}
\end{equation*}
then they are not \GS,  respectively. Since the base field $\bfk$ is of characteristic 0, we rewrite B type OLPIs as
\par
\noindent {\bf Case 1:} for $\alpha\neq -1$ and $\beta \neq -1$,
\begin{align}
& [x \blw{\blw{y}}]-\frac{1}{\alpha + 1}\blw{\blw{[x y]} }- \frac{\alpha}{\alpha + 1}\blw{[x \blw{y}]} \text{ with } 0\neq \alpha \in\bfk ,
\mlabel{eq:br1}\\
&[\blw{\blw{x}}y]-\frac{1}{\beta + 1}\blw{\blw{[x y]} } - \frac{\beta}{\beta + 1} \blw{[\blw{x}y]}  \text{ with } 0\neq \beta \in\bfk.\mlabel{eq:bl1}
\end{align}
\par
\noindent {\bf Case 2:} for $\alpha= -1$ and $\beta =-1$,
\begin{align}
& [\blw{x \blw{y}}]-\blw{\blw{[x y]} } ,\mlabel{eq:br2}\\
&[\blw{\blw{x}y}]-\blw{\blw{[x y]} }.\mlabel{eq:bl2}
\end{align}
\end{remark}

\begin{proposition}\mlabel{pr:bb}
Let $X$ be a well-ordered set. Four OLPIs in Eqs.~\eqref{eq:br1}, ~\eqref{eq:bl1}, ~\eqref{eq:br2} and~\eqref{eq:bl2}
are respectively \GS with respect to the invariant monomial order $\ordqc$.
\end{proposition}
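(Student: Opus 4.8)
The plan is to verify Definition~\ref{def:gsbl} directly, following the template of Propositions~\ref{pr:rb} and~\ref{pr:a}. By the left--right symmetry used throughout (interchanging the two arguments), it suffices to treat the two ``right'' identities, Eq.~\eqref{eq:br1} for $\alpha\neq -1$ and Eq.~\eqref{eq:br2} for $\alpha=-1$; the ``left'' identities Eqs.~\eqref{eq:bl1} and~\eqref{eq:bl2} then follow verbatim. The first task in each case is to pin down the leading monomial under $\ordqc$, which is exactly where the rewriting of Remark~\ref{re:rb} does its work.

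For Eq.~\eqref{eq:br1}, put $\phi(x,y):=[x\blw{\blw{y}}]-\frac{1}{\alpha+1}\blw{\blw{[xy]}}-\frac{\alpha}{\alpha+1}\blw{[x\blw{y}]}$. The three associative words $x\blw{\blw{y}}$, $\blw{\blw{xy}}$, $\blw{x\blw{y}}$ all have degree $4$, but $x\blw{\blw{y}}$ has breadth $2$ against breadth $1$ for the others, so $\lbar{\phi(u,v)}=u\blw{\blw{v}}$. This is exactly the leading monomial of the New identity A (right) OLPI, so $S_\phi$ carries the same two including compositions as in Proposition~\ref{pr:a}: $\langle\phi(q\suba{u\blw{\blw{v}}},w),\phi(u,v)\rangle_{w_1}$ with $w_1=q\suba{u\blw{\blw{v}}}\blw{\blw{w}}$ and $\langle\phi(u,q\suba{v\blw{\blw{w}}}),\phi(v,w)\rangle_{w_2}$ with $w_2=u\blw{\blw{q\suba{v\blw{\blw{w}}}}}$. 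I would expand $[q\suba{\phi(u,v)}\blw{\blw{w}}]_{u\blw{\blw{v}}}$ by Lemma~\ref{lem:jeq}, substitute, and run the reductions of Proposition~\ref{pr:a} verbatim, now carrying the coefficients $-\frac{1}{\alpha+1}$ and $-\frac{\alpha}{\alpha+1}$ in place of $1$ and $1$; the cancellations close to $0$ modulo $(S_\phi,w_i)$ in the same pattern, so Case~1 is essentially a transcription.

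For Eq.~\eqref{eq:br2}, put $\phi(x,y):=\blw{[x\blw{y}]}-\blw{\blw{[xy]}}$. The two associative words $\blw{x\blw{y}}$ and $\blw{\blw{xy}}$ both have degree $4$ and breadth $1$, so the order descends into the contents $x\blw{y}$ versus $\blw{xy}$; since $x\blw{y}$ has breadth $2$ against breadth $1$, we get $x\blw{y}\ordc\blw{xy}$ and hence $\lbar{\phi(u,v)}=\blw{u\blw{v}}$ --- not the naively expected $\blw{\blw{uv}}$, which is exactly why the problematic composition of Remark~\ref{re:rb} disappears. Since this leading monomial is a single bracketed prime, it has breadth $1$, so the intersection inequality $\max\{|\lbar{\clie{f}}|,|\lbar{\clie{g}}|\}<|w|<|\lbar{\clie{f}}|+|\lbar{\clie{g}}|$ of Definition~\ref{definition:compl} can never hold and there are no intersection compositions; moreover distinct instances never share a leading monomial (the trailing prime $\blw{v}$ fixes $v$, hence $u$), so no same-leading-term composition occurs. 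The remaining compositions are proper including ones (the compositions involving the degenerate generators $\blw{[u\blw{u}]}$ being trivial, as in Proposition~\ref{pr:rb}), obtained by nesting $\blw{u\blw{v}}$ into the first argument $s$ of an instance $\phi(s,t)$, or into its trailing bracketed prime $\blw{t}$ (the case $t=u\blw{v}$ together with the deeper nestings). For each I would apply the rule $\blw{[u\blw{v}]}\to\blw{\blw{[uv]}}$ coming from $\phi$, together with Lemma~\ref{lem:jeq} and the Jacobi identity, and check triviality as for the average OLPI in Proposition~\ref{pr:rb}.

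The main obstacle is confined to Case~2. First one must resist the expectation that the leading monomial is $\blw{\blw{uv}}$ and instead carry out the nested deg--lex comparison yielding $\blw{u\blw{v}}$; getting this wrong reinstates the nontrivial composition flagged in Remark~\ref{re:rb}. Second, one must enumerate the including compositions correctly --- a single-prime bracketed leading monomial nests in several configurations inside $\blw{s\blw{t}}$ --- and drive each reduction to closure, tracking both the correction sums $\sum_i\alpha_i(\cdots)$ from Lemma~\ref{lem:jeq} and the extra Jacobi terms, which are more numerous than in the associative setting. Case~1, by contrast, inherits its entire composition analysis from Proposition~\ref{pr:a}.
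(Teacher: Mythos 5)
Your proposal is correct and follows the paper's own proof essentially step for step: the same reduction by left--right symmetry to Eqs.~\eqref{eq:br1} and~\eqref{eq:br2}, the same leading monomials $u\blw{\blw{v}}$ (for $\alpha\neq-1$) and $\blw{u\blw{v}}$ (for $\alpha=-1$) under $\ordqc$, and the same verification of the resulting including compositions via Lemma~\ref{lem:jeq}, run in parallel with Propositions~\ref{pr:a} and~\ref{pr:rb}. The only difference is one of bookkeeping: in Case~2 you enumerate the inclusions a bit more carefully than the paper, which records only the two compositions $w_3=\blw{q\suba{\blw{u\blw{v}}}\blw{w}}$ and $w_4=\blw{u\blw{q\suba{\blw{v\blw{w}}}}}$ and leaves the nesting with $t=u\blw{v}$ (and the degenerate $u=v$ instances) implicit.
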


\begin{proof}
We only consider the cases of Eqs.~\eqref{eq:br1} and~\eqref{eq:br2}, as others are similar.
Define
\begin{align*}
\phi(x,y):=~&[x \blw{\blw{y}}]-\frac{1}{\alpha + 1}\blw{\blw{[x y]} }- \frac{\alpha}{\alpha + 1}\blw{[x \blw{y}]},\\
\psi(x,y):=~&[\blw{x \blw{y}}]-\blw{\blw{[x y]} },
\end{align*}
and denote
\begin{align*}
S_\phi:=~&\{\phi(u,v)\,|\,u,v\in\alsbwo{X}{\ordqc} \},\\
S_\psi:=~&\{\psi(u,v)\,|\,u,v\in\alsbwo{X}{\ordqc} \}.
\end{align*}

We first check that $S_\phi$ is a \gsb. With the order $\ordqc$, $u \blw{\blw{v}}$ is the leading monomial of $\phi(u,v)$.
There are two compositions
\begin{eqnarray*}
&&\langle\phi(q\suba{u \blw{\blw{v}}},w), \phi(u,v)\rangle_{w_1} \,\text{ for }\, w_1=q\suba{u \blw{\blw{v}}}\blw{\blw{w}}\in \alsbwo{X}{\ordqc},\\
&&\langle\phi(u, q\suba{v \blw{\blw{w}}}),\phi(v,w)\rangle_{w_2} \,\text{ for }\, w_2=u\blw{\blw{q\suba{v \blw{\blw{w}}}}}\in \alsbwo{X}{\ordqc}.
\end{eqnarray*}
We only check that the first one is trivial, as the second one is similar. By Lemma~\mref{lem:jeq}, we obtain
\begin{eqnarray*}
[q\suba{\phi(u,v)}\blw{\blw{w}}]_{u\blw{\blw{v}}}
&=&[q\suba{u \blw{\blw{v}}} \blw{\blw{w}}]-\frac{1}{\alpha + 1}[q\suba{\blw{\blw{[uv]} }} \blw{\blw{w}}]- \frac{\alpha}{\alpha + 1} [q\suba{\blw{[u \blw{v}]}} \blw{\blw{w}}]\\
&&+\sum_i\alpha_i\left([q_i\suba{\phi(u,v)} \blw{\blw{w}}]\right),
\end{eqnarray*}
for some $\alpha_i\in\bfk$, $q_i\in\sopm{X}$ with $q\suba{u \blw{\blw{v}}}\ordc q_i\suba{u \blw{\blw{v}}}$. Hence
\allowdisplaybreaks{
\begin{eqnarray*}
&&\langle\phi(q\suba{u \blw{\blw{v}}},w), \phi(u,v)\rangle_{w_1}\\
&=&\phi(q\suba{u \blw{\blw{v}}},w)-[q\suba{\phi(u,v)}\blw{\blw{w}}]_{u\blw{\blw{v}}}\\
&=&[q\suba{u \blw{\blw{v}}} \blw{\blw{w}}]-\frac{1}{\alpha + 1}\blw{\blw{[q\suba{u \blw{\blw{v}}}w]} }- \frac{\alpha}{\alpha + 1}\blw{[q\suba{u \blw{\blw{v}}} \blw{w}]}\\
&&-[q\suba{u \blw{\blw{v}}} \blw{\blw{w}}]+\frac{1}{\alpha + 1}[q\suba{\blw{\blw{[uv]} }} \blw{\blw{w}}]+ \frac{\alpha}{\alpha + 1} [q\suba{\blw{[u \blw{v}]}} \blw{\blw{w}}]-\sum_i\alpha_i\left([q_i\suba{\phi(u,v)} \blw{\blw{w}}]\right)\\
&=&-\frac{1}{\alpha + 1}\blw{\blw{[q\suba{u \blw{\blw{v}}}w]} }- \frac{\alpha}{\alpha + 1}\blw{[q\suba{u \blw{\blw{v}}} \blw{w}]}
+\frac{1}{\alpha + 1}[q\suba{\blw{\blw{[uv]} }} \blw{\blw{w}}]+ \frac{\alpha}{\alpha + 1} [q\suba{\blw{[u \blw{v}]}} \blw{\blw{w}}]\\
&&-\sum_i\alpha_i\left([q_i\suba{\phi(u,v)} \blw{\blw{w}}]\right)\\
&\equiv&-\frac{1}{\alpha + 1}\blw{\blw{[q\suba{u \blw{\blw{v}}}w]} }- \frac{\alpha}{\alpha + 1}\blw{[q\suba{u \blw{\blw{v}}} \blw{w}]}
+\frac{1}{\alpha + 1}[q\suba{\blw{\blw{[uv]} }} \blw{\blw{w}}]+ \frac{\alpha}{\alpha + 1} [q\suba{\blw{[u \blw{v}]}} \blw{\blw{w}}]\\
&\equiv&-\frac{1}{(\alpha + 1)^2}\blw{\blw{[q\suba{\blw{\blw{[uv]} }}w]} }- \frac{\alpha}{(\alpha + 1)^2}\blw{\blw{[q\suba{\blw{[u \blw{v}]}}w]} }
-\frac{\alpha}{(\alpha + 1)^2}{\blw{[q\suba{\blw{\blw{[uv]} }}\blw{w}]} }
\\&&- \frac{\alpha^2}{(\alpha + 1)^2}{\blw{[q\suba{\blw{[u \blw{v}]}}\blw{w}]} }
+\frac{1}{(\alpha + 1)^2}[\blw{\blw{q\suba{\blw{\blw{[uv]} }} w}}]+\frac{\alpha}{(\alpha + 1)^2}[\blw{q\suba{\blw{\blw{[uv]} }} \blw{w}}]\\
&&+\frac{\alpha}{(\alpha + 1)^2}[\blw{\blw{q\suba{\blw{[u \blw{v}]}} w}}]+\frac{\alpha^2}{(\alpha + 1)^2}[\blw{q\suba{\blw{[u \blw{v}]}} \blw{w}}]\\
&=&0 \mod( S_\phi, w_1),
\end{eqnarray*}}
whence $S_\phi$ is a \gsb.

Finally we show that $S_\psi$ is a \gsb. With the order $\ordqc$, we have $\lbar{\psi(u,v)}=\blw{u \blw{v}}$.
There are two compositions
\begin{eqnarray*}
&&\langle\psi(q\suba{\blw{u \blw{v}}},w), \psi(u,v)\rangle_{w_3} \,\text{ for }\, w_3=\blw{q\suba{\blw{u \blw{v}}}\blw{w}}\in \alsbwo{X}{\ordqc},\\
&&\langle\psi(u, q\suba{\blw{v \blw{w}}}),\psi(v,w)\rangle_{w_4} \,\text{ for }\, w_4=\blw{u\blw{q\suba{\blw{v \blw{w}}}}}\in \alsbwo{X}{\ordqc}.
\end{eqnarray*}
For the first composition, by Lemma~\mref{lem:jeq},  we have
$$[\blw{q\suba{\psi(u,v)} \blw{w}}]_{\blw{u \blw{v}}}
=[\blw{q\suba{\blw{[u \blw{v}]}} \blw{w}}]-[\blw{q\suba{ \blw{\blw{[uv]}}} \blw{w}}]
+\sum_i\alpha_i[\blw{q_i\suba{\psi(u,v)} \blw{w}}],$$
for some $\alpha_i\in\bfk$, $q_i\in\sopm{X}$ with $q\suba{\blw{u \blw{v}}}\ordc q_i\suba{\blw{u \blw{v}}}$. So
\begin{eqnarray*}
&&\langle\psi(q\suba{\blw{u \blw{v}}},w), \psi(u,v)\rangle_{w_3}\\
&=&[\blw{q\suba{\blw{u \blw{v}}} \blw{w}}]-\blw{\blw{[q\suba{\blw{u \blw{v}}}w]} }-[\blw{q\suba{\psi(u,v)} \blw{w}}]_{\blw{u \blw{v}}}\\
&=&[\blw{q\suba{\blw{u \blw{v}}} \blw{w}}]-\blw{\blw{[q\suba{\blw{u \blw{v}}}w]} }
-[\blw{q\suba{\blw{[u \blw{v}]}} \blw{w}}]+[\blw{q\suba{ \blw{\blw{[uv]}}} \blw{w}}]
-\sum_i\alpha_i[\blw{q_i\suba{\psi(u,v)} \blw{w}}]\\
&=&-\blw{\blw{[q\suba{\blw{u \blw{v}}}w]} }+[\blw{q\suba{ \blw{\blw{[uv]}}} \blw{w}}]
-\sum_i\alpha_i[\blw{q_i\suba{\psi(u,v)} \blw{w}}]\\
&\equiv&-\blw{\blw{[q\suba{\blw{u \blw{v}}}w]} }+[\blw{q\suba{ \blw{\blw{[uv]}}} \blw{w}}]\\
&\equiv&-\blw{\blw{[q\suba{\blw{\blw{[u v]}}}w]} }+\blw{\blw{[q\suba{ \blw{\blw{[uv]}}} w]}}\\
&=&0 \mod(S_\psi, w_3).
\end{eqnarray*}
Similarly, we obtain the second composition is trivial. Therefore $S_\psi$ is also a \gsb.
\end{proof}

As an immediate consequence of Remark~\mref{re:rb} and Proposition~\mref{pr:b}, we derive the following result.

\begin{corollary}\mlabel{pr:b}
Let $X$ be a well-ordered set. Two B type OLPIs in Eqs.~\eqref{ea:d3}
are respectively \GS with respect to the invariant monomial order $\ordqc$.
\end{corollary}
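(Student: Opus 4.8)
The plan is to obtain this corollary with no further composition calculations, by reducing each B type OLPI to one of the four identities already handled in Proposition~\mref{pr:bb}. The key conceptual point is that being \GS is a property of the \emph{monic normalization} of an identity: if $\phi$ is a B type OLPI and $c\in\bfk$ is the coefficient of its leading monomial under $\ordqc$, then $c^{-1}\phi$ generates the same ideal and $S_{c^{-1}\phi}=\{c^{-1}\phi(u,v)\}$ consists of scalar multiples of the elements of $S_\phi$, so the triviality of every intersection and including composition is the same for both. First, then, I would reduce the problem to identifying, for each nonzero $\alpha$ (resp.\ $\beta$), the monic form of the B (right) (resp.\ B (left)) OLPI and recognizing it among Eqs.~\eqref{eq:br1}--\eqref{eq:bl2}.

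Next I would pin down the leading monomial under $\ordqc$. The three underlying monomials $\blw{\blw{xy}}$, $\blw{x\blw{y}}$ and $x\blw{\blw{y}}$ all have degree $4$, so the order is decided first by breadth: the summands $[x\blw{\blw{y}}]$ and $[\blw{\blw{x}}y]$ have breadth $2$, while the remaining two summands have breadth $1$. Hence, provided the coefficient $-(\alpha+1)$ (resp.\ $-(\beta+1)$) is nonzero, the genuine leading monomial is $x\blw{\blw{y}}$ (resp.\ $\blw{\blw{x}}y$) rather than the nested-operator monomial $\blw{\blw{xy}}$, exactly as anticipated in Remark~\mref{re:rb}. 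Dividing by that leading coefficient then yields precisely the monic identities in Eqs.~\eqref{eq:br1} and~\eqref{eq:bl1}, which Proposition~\mref{pr:bb} shows to be \GS with respect to $\ordqc$.

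Finally I would treat the degenerate parameter values. When $\alpha=-1$ (resp.\ $\beta=-1$) the breadth-$2$ summand disappears and the identity collapses to $\blw{\blw{[xy]}}-\blw{[x\blw{y}]}$ (resp.\ $\blw{\blw{[xy]}}-\blw{[\blw{x}y]}$); among the two surviving breadth-$1$ monomials one checks under $\ordqc$ that $\blw{x\blw{y}}\ordc\blw{\blw{xy}}$ (resp.\ $\blw{\blw{x}y}\ordc\blw{\blw{xy}}$), since inside the outer bracket $x\blw{y}$ has breadth $2$ while $\blw{xy}$ has breadth $1$. Normalizing the leading term then gives Eqs.~\eqref{eq:br2} and~\eqref{eq:bl2}, again \GS by Proposition~\mref{pr:bb}, and the opening remark carries this back to the original identity. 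The only real subtlety here---and precisely what Remark~\mref{re:rb} is designed to flag---is that the leading monomial is governed by breadth rather than by the ostensibly dominant doubly nested term $\blw{\blw{xy}}$, and that the identity of the leading term jumps at the degenerate value $\alpha=-1$ (resp.\ $\beta=-1$); once this case split is made correctly, no genuine obstacle remains, as all composition computations have already been carried out in Proposition~\mref{pr:bb}.
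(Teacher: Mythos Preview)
Your proposal is correct and follows essentially the same approach as the paper: the paper's one-line proof simply invokes Remark~\mref{re:rb} (which performs exactly the case split and monic normalization you describe) together with Proposition~\mref{pr:bb}. Your elaboration of why the leading monomial under $\ordqc$ is governed by breadth, and why the case split at $\alpha=-1$ (resp.\ $\beta=-1$) is needed, is precisely the content of Remark~\mref{re:rb} made explicit.
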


The following result focuses on the C type OLPI.

\begin{proposition}\mlabel{pr:c}
Let $X$ be a well-ordered set. The OLPI
\allowdisplaybreaks
\begin{equation*}
[\blw{\blw{x}}y] +\blw{\blw{[x y]}} +  [x \blw{\blw{y}}] + 2[\blw{x}\blw{y}] - 2\blw{[\blw{x}y]} - 2\blw{[x \blw{y}]} \quad\text{\rm(New identity C)}
\end{equation*}
is \GS with respect to the invariant monomial order $\ordqc$.
\end{proposition}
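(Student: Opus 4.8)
The plan is to run the same Composition-Diamond argument already used for the A- and B-type OLPIs in Propositions~\mref{pr:a} and~\mref{pr:bb}, exploiting the fact that New identity C shares its leading monomial with New identity A (left). Writing $\phi(x,y)$ for New identity C and substituting $u,v\in\alsbwo{X}{\ordqc}$, the first step is to pin down $\lbar{\phi(u,v)}$ under $\ordqc$. The six underlying associative words all have the same degree $\deg(u)+\deg(v)+2$, so $\ordqc$ passes to breadth: the three breadth-two words $\blw{\blw{u}}v$, $u\blw{\blw{v}}$, $\blw{u}\blw{v}$ dominate the three singly wrapped words $\blw{\blw{[uv]}}$, $\blw{[\blw{u}v]}$, $\blw{[u\blw{v}]}$, and among the former $\blw{\blw{u}}v$ wins because its leading prime $\blw{\blw{u}}$, being doubly bracketed, has strictly largest degree. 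Hence $\lbar{\phi(u,v)}=\blw{\blw{u}}v$. This is exactly the feature recorded in Remark~\mref{re:rb}: under $\ordt$ the leading monomial would instead be $\blw{\blw{[uv]}}$, which produces a nontrivial intersection composition; this is precisely why $\ordqc$ is the correct order here.

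Setting $S_\phi=\{\phi(u,v)\mid u,v\in\alsbwo{X}{\ordqc}\}$, I note that the compositions depend only on the leading monomial $\blw{\blw{u}}v$, which coincides with that of New identity A (left); so the overlaps to be checked are the same as in that case, namely the two including compositions obtained by feeding a context-word containing another leading monomial into either slot of $\phi$:
\begin{align*}
&\langle\phi(q\suba{\blw{\blw{u}}v},w),\phi(u,v)\rangle_{w_1}\,\text{ for }\,w_1=\blw{\blw{q\suba{\blw{\blw{u}}v}}}w\in\alsbwo{X}{\ordqc},\\
&\langle\phi(u,q\suba{\blw{\blw{v}}w}),\phi(v,w)\rangle_{w_2}\,\text{ for }\,w_2=\blw{\blw{u}}q\suba{\blw{\blw{v}}w}\in\alsbwo{X}{\ordqc}.
\end{align*}

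For each of these I would expand the special normal $s$-word by Lemma~\mref{lem:jeq}, writing the relevant $[q\suba{\phi(u,v)}\cdots]_{\blw{\blw{u}}v}$ as the full substitution of all six terms of $\phi(u,v)$ into the surrounding context, plus a tail $\sum_i\alpha_i(\cdots)$ with $q\suba{\blw{\blw{u}}v}\ordc q_i\suba{\blw{\blw{u}}v}$. After the two leading copies of $\blw{\blw{u}}v$ cancel, I would reduce every surviving monomial whose underlying word still has the form $\blw{\blw{a}}b$ by replacing it, modulo $S_\phi$, with the negatives of the remaining five terms of $\phi$, and then rearrange the resulting Lie brackets (the $\equiv$ steps) exactly as in Propositions~\mref{pr:a} and~\mref{pr:bb}, continuing until every monomial is strictly below $w_i$ and the whole expression collapses to $0\bmod(S_\phi,w_i)$.

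The main obstacle is the combinatorial bulk rather than any conceptual difficulty: New identity C carries six terms (against three for the A- and B-type identities), two of them with the coefficient $2$, and each reduction through the double operator $\blw{\blw{\ }}$ followed by an application of Lemma~\mref{lem:jeq} spawns a large number of nested-bracket monomials. The delicate bookkeeping is to verify that the five substituted non-leading terms, the Jacobi-rearranged brackets, and the $q_i$-tail contributions all pair off and cancel in full. No ingredient beyond Lemma~\mref{lem:jeq} and the invariance of $\ordqc$ is required.
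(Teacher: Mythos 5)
Your framework is the one the paper itself uses: the same leading monomial $\lbar{\phi(u,v)}=\blw{\blw{u}}v$, the same two including compositions on the ambient words $w_1=\blw{\blw{q\suba{\blw{\blw{u}}v}}}w$ and $w_2=\blw{\blw{u}}q\suba{\blw{\blw{v}}w}$, and the same tools (Lemma~\mref{lem:jeq} followed by reduction modulo $S_\phi$). The genuine gap is that you stop exactly where the proof begins: you assert that after the leading copies of $\blw{\blw{u}}v$ cancel, the surviving terms ``pair off and cancel in full,'' but you never carry out the reduction. For this proposition that computation \emph{is} the content. Triviality of the compositions is not a formal consequence of the setup: the paper's unnumbered remark after Proposition~\mref{pr:rb} shows that this very identity C admits a non-trivial composition under $\ordt$, and Remark~\mref{re:rb} shows the B-type identities fail to be \GS when the wrong monomial leads. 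So whether the five substituted lower terms, their bracket rearrangements, and the $q_i$-tails supplied by Lemma~\mref{lem:jeq} really cancel must be exhibited term by term---the paper does this in a page-long chain of $\equiv$-reductions---and a plan that defers this to ``delicate bookkeeping'' has not proved anything. (Incidentally, the $\ordt$ failure you invoke is recorded in that unnumbered remark, not in Remark~\mref{re:rb}, which concerns the B-type identities.)

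A second, smaller flaw: your identification of the leading monomial is not valid as argued. You call $\blw{\blw{u}}v$, $u\blw{\blw{v}}$, $\blw{u}\blw{v}$ ``breadth-two words'' and break the tie via the first prime $\blw{\blw{u}}$; that presumes $u$ and $v$ are prime. For general $u,v\in\alsbwo{X}{\ordqc}$ these words have breadths $1+|v|$, $|u|+1$ and $2$ respectively, and in the definition of $\ordqc$ breadth is compared \emph{before} the prime-by-prime lexicographic comparison, so your tie-break only covers the case $|u|=|v|$. The paper asserts $\lbar{\phi(u,v)}=\blw{\blw{u}}v$ without proof, so your conclusion agrees with it, but your stated reasoning does not establish it in the generality in which $S_\phi$ is defined.
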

\begin{proof}
Denote
\begin{equation*}
\phi(x,y) :=[\blw{\blw{x}}y] +\blw{\blw{[x y]}} +  [x \blw{\blw{y}}] + 2[\blw{x}\blw{y}] - 2\blw{[\blw{x}y]} - 2\blw{[x \blw{y}]},
\end{equation*}
and
$$S_\phi :=\{\phi(u,v)\,|\,u,v\in\alsbwo{X}{\ordqc} \}.$$
With the order $\ordqc$, we have $\lbar{\phi(u,v)}=\blw{\blw{u}}v$. There are two compositions
\begin{eqnarray*}
&&\langle\phi(q\suba{ \blw{\blw{u}}v},w), \phi(u,v)\rangle_{w_1} \,\text{ for }\, w_1=\blw{\blw{q\suba{ \blw{\blw{u}}v}}}w\in \alsbwo{X}{\ordqc},\\
&&\langle\phi(u, q\suba{ \blw{\blw{v}}w}),\phi(v,w)\rangle_{w_2} \,\text{ for }\, w_2=\blw{\blw{u}}q\suba{ \blw{\blw{v}}w}\in \alsbwo{X}{\ordqc}.
\end{eqnarray*}
We only check the first one is trivial, as the second one is similar. Using Lemma~\mref{lem:jeq}, we obtain
\begin{eqnarray*}
&&[\blw{\blw{q\suba{ \phi(u,v)}}}w]_{\blw{\blw{u}}v}\\
&=& [\blw{\blw{q\suba{ \phi(u,v)}}}w]+\sum_i\alpha_i\Big([\blw{\blw{q_i\suba{ \phi(u,v)}}}w]\big)\\
&=&  [\blw{\blw{q\suba{   [\blw{\blw{u}}v]          }}}w]+[\blw{\blw{q\suba{   \blw{\blw{[uv]}}          }}}w]+[\blw{\blw{q\suba{     [u \blw{\blw{v}}]        }}}w]
+2[\blw{\blw{q\suba{  [\blw{u}\blw{v}]           }}}w]\\
&&-2[\blw{\blw{q\suba{      \blw{[\blw{u}v]}       }}}w]-2[\blw{\blw{q\suba{   \blw{[u \blw{v}]}          }}}w]
+\sum_i\alpha_i\Big([\blw{\blw{q_i\suba{ \phi(u,v)}}}w]\big),
\end{eqnarray*}
for some $\alpha_i\in\bfk$, $q_i\in\sopm{X}$ with $q\suba{u \blw{\blw{v}}}\ordc q_i\suba{u \blw{\blw{v}}}$.
Consequently,
\nc\phicna[2]{\blw{\blw{[#1#2]}} +  [#1 \blw{\blw{#2}}]+ 2[\blw{#1}\blw{#2}] - 2\blw{[\blw{#1}#2]}\\&& - 2\blw{[#1 \blw{#2}]}}
\nc\phicnb[2]{\blw{\blw{[#1#2]}} +  [#1 \blw{\blw{#2}}]+ 2[\blw{#1}\blw{#2}]\\&& - 2\blw{[\blw{#1}#2]} - 2\blw{[#1 \blw{#2}]}}
\nc\phicnc[2]{2\blw{\blw{[#1#2]}} +  2[#1 \blw{\blw{#2}}]\\&&+ 4[\blw{#1}\blw{#2}] - 4\blw{[\blw{#1}#2]} - 4\blw{[#1 \blw{#2}]}}
\nc\phica[2]{-2\blw{\blw{[#1#2]}}\\&& - 2 [#1 \blw{\blw{#2}}]- 4[\blw{#1}\blw{#2}] + 4\blw{[\blw{#1}#2]} + 4\blw{[#1 \blw{#2}]}\\}
\nc\phicb[2]{&&-2\blw{\blw{[#1#2]}} - 2 [#1 \blw{\blw{#2}}]- 4[\blw{#1}\blw{#2}] + 4\blw{[\blw{#1}#2]}\\&& + 4\blw{[#1 \blw{#2}]}}
\allowdisplaybreaks{\small
\begin{eqnarray*}
&&\langle\phi(q\suba{ \blw{\blw{u}}v},w), \phi(u,v)\rangle_{w_1}\\
&=& \phi(q\suba{ \blw{\blw{u}}v},w)-[\blw{\blw{q\suba{ \phi(u,v)}}}w]_{\blw{\blw{u}}v}\\
&=&[\blw{\blw{q\suba{ \blw{\blw{u}}v}}}w] +\blw{\blw{[q\suba{ \blw{\blw{u}}v}w]}} +  [q\suba{ \blw{\blw{u}}v} \blw{\blw{w}}] + 2[\blw{q\suba{ \blw{\blw{u}}v}}\blw{w}] - 2\blw{[\blw{q\suba{ \blw{\blw{u}}v}}w]} - 2\blw{[q\suba{ \blw{\blw{u}}v} \blw{w}]}\\
&&- [\blw{\blw{q\suba{   [\blw{\blw{u}}v]          }}}w]-[\blw{\blw{q\suba{   \blw{\blw{[uv]}}          }}}w]-[\blw{\blw{q\suba{     [u \blw{\blw{v}}]        }}}w]
-2[\blw{\blw{q\suba{  [\blw{u}\blw{v}]           }}}w]\\
&&+2[\blw{\blw{q\suba{      \blw{[\blw{u}v]}       }}}w]+2[\blw{\blw{q\suba{   \blw{[u \blw{v}]}          }}}w]
-\sum_i\alpha_i\Big([\blw{\blw{q_i\suba{ \phi(u,v)}}}w]\big)\\
&=&\blw{\blw{[q\suba{ \blw{\blw{u}}v}w]}} +  [q\suba{ \blw{\blw{u}}v} \blw{\blw{w}}] + 2[\blw{q\suba{ \blw{\blw{u}}v}}\blw{w}] - 2\blw{[\blw{q\suba{ \blw{\blw{u}}v}}w]} - 2\blw{[q\suba{ \blw{\blw{u}}v} \blw{w}]}\\
&&-[\blw{\blw{q\suba{   \blw{\blw{[uv]}}          }}}w]-[\blw{\blw{q\suba{     [u \blw{\blw{v}}]        }}}w]-2[\blw{\blw{q\suba{  [\blw{u}\blw{v}]           }}}w]
+2[\blw{\blw{q\suba{      \blw{[\blw{u}v]}       }}}w]+2[\blw{\blw{q\suba{   \blw{[u \blw{v}]}          }}}w]\\
&&-\sum_i\alpha_i\Big([\blw{\blw{q_i\suba{ \phi(u,v)}}}w]\big)\\
&\equiv&\blw{\blw{[q\suba{ \blw{\blw{u}}v}w]}} +  [q\suba{ \blw{\blw{u}}v} \blw{\blw{w}}] + 2[\blw{q\suba{ \blw{\blw{u}}v}}\blw{w}] - 2\blw{[\blw{q\suba{ \blw{\blw{u}}v}}w]} - 2\blw{[q\suba{ \blw{\blw{u}}v} \blw{w}]}\\
&&-[\blw{\blw{q\suba{   \blw{\blw{[uv]}}          }}}w]-[\blw{\blw{q\suba{     [u \blw{\blw{v}}]        }}}w]-2[\blw{\blw{q\suba{  [\blw{u}\blw{v}]           }}}w]
+2[\blw{\blw{q\suba{      \blw{[\blw{u}v]}       }}}w]+2[\blw{\blw{q\suba{   \blw{[u \blw{v}]}          }}}w]\\
&\equiv&
\blw{\blw{[q\suba{ -\blw{\blw{[uv]}} -  [u \blw{\blw{v}}] - 2[\blw{u}\blw{v}] + 2\blw{[\blw{u}v]} + 2\blw{[u \blw{v}]}}w]}}
+  [q\suba{ -\blw{\blw{[uv]}} -  [u \blw{\blw{v}}] - 2[\blw{u}\blw{v}] + 2\blw{[\blw{u}v]} + 2\blw{[u \blw{v}]}} \blw{\blw{w}}]\\
&&+2[\blw{q\suba{-\blw{\blw{[uv]}} -  [u \blw{\blw{v}}] - 2[\blw{u}\blw{v}] + 2\blw{[\blw{u}v]} + 2\blw{[u \blw{v}]}}}\blw{w}]
-2\blw{[\blw{q\suba{ -\blw{\blw{[uv]}} -  [u \blw{\blw{v}}] - 2[\blw{u}\blw{v}] + 2\blw{[\blw{u}v]} + 2\blw{[u \blw{v}]}}}w]} \\
&&-2\blw{[q\suba{ -\blw{\blw{[uv]}} -  [u \blw{\blw{v}}] - 2[\blw{u}\blw{v}] + 2\blw{[\blw{u}v]} + 2\blw{[u \blw{v}]}} \blw{w}]}\\
&&+\phicna{\blw{\blw{q\suba{\blw{\blw{[uv]}}}}}}{w}
+\phicnb{\blw{\blw{q\suba{     [u \blw{\blw{v}}]        }}}}{w}
\phicnc{\blw{\blw{q\suba{  [\blw{u}\blw{v}]           }}}}{w}
\phica{\blw{\blw{q\suba{      \blw{[\blw{u}v]}       }}}}{w}
\phicb{\blw{\blw{q\suba{   \blw{[u \blw{v}]}          }}}}{w}\\
&=&0 \mod(S_\phi, w_1),
\end{eqnarray*}}
as required.
\end{proof}

Now we are in a position to consider the P type OLPIs.
\begin{proposition}\mlabel{pr:p}
Let $X$ be a well-ordered set. The OLPIs
\allowdisplaybreaks
\begin{equation*}
  \begin{aligned}
&[\blw{\blw{x}}y] -\blw{[\blw{x}y]}  &&\text{\rm(P 1 )},\\
&\blw{[\blw{x}y]} &&\text{\rm(P 2 )},\\
&[x \blw{\blw{y}}]-\blw{[x \blw{y}]} &&\text{\rm(P 3 )},\\
&\blw{[x \blw{y}]} &&\text{\rm(P 4 )},\\
&[\blw{x}\blw{y}] &&\text{\rm(P 5 )}
  \end{aligned}
\end{equation*}
are respectively  \GS with respect to the invariant monomial order $\ordqc$.
\end{proposition}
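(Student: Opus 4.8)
The plan is to treat the three monomial OLPIs and the two binomial OLPIs separately. The identities $\blw{[\blw{x}y]}$ (P\,2), $\blw{[x\blw{y}]}$ (P\,4) and $[\blw{x}\blw{y}]$ (P\,5) are single monomials, so for each the system $S_\phi$ consists of monic Lyndon-Shirshov bracketed words. As in Remark~\mref{re:d1g} and the opening of Theorem~\mref{tm:d2}, such a system is automatically \GS: the only compositions that can arise come from an overlap of two leading words (for instance $\blw{u}\blw{v}$ and $\blw{v}\blw{w}$ in the case of P\,5), and every such composition collapses, via a single application of the Jacobi identity, to a scalar multiple of another bracketed word of the same shape with strictly smaller leading word, hence is trivial modulo $(S_\phi,w)$.

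For the binomial identities I would observe first that they are linked by the antisymmetry of the bracket: since $[\blw{\blw{x}}y]=-[y\blw{\blw{x}}]$ and $\blw{[\blw{x}y]}=-\blw{[y\blw{x}]}$, relabelling $x\leftrightarrow y$ and changing sign carries P\,3 to P\,1, so the two systems coincide up to sign and it suffices to prove one; I choose P\,3. Set $\phi(x,y):=[x\blw{\blw{y}}]-\blw{[x\blw{y}]}$. Because $\blw{[u\blw{v}]}$ has breadth $1$ while the bracket $[u\blw{\blw{v}}]$ has breadth $\ge 2$, and both have the same degree, the order $\ordqc$ gives $\lbar{\phi(u,v)}=u\blw{\blw{v}}$, which is exactly the leading word already encountered in the New identity A (right) and B (right) cases.

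Consequently the composition analysis has the same shape as in Propositions~\mref{pr:a} and~\mref{pr:bb}: the relevant compositions are the two including compositions $\langle\phi(q\suba{u\blw{\blw{v}}},w),\phi(u,v)\rangle_{w_1}$ with $w_1=q\suba{u\blw{\blw{v}}}\blw{\blw{w}}$ and $\langle\phi(u,q\suba{v\blw{\blw{w}}}),\phi(v,w)\rangle_{w_2}$ with $w_2=u\blw{\blw{q\suba{v\blw{\blw{w}}}}}$. For the first I would expand $[q\suba{\phi(u,v)}\blw{\blw{w}}]_{u\blw{\blw{v}}}$ by Lemma~\mref{lem:jeq}, cancel the common leading summand $[q\suba{u\blw{\blw{v}}}\blw{\blw{w}}]$, and then rewrite each surviving occurrence of $u\blw{\blw{v}}$ as $\blw{[u\blw{v}]}$; after this rewriting the remaining terms pair off and cancel, so the composition is trivial modulo $(S_\phi,w_1)$. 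The second composition is handled the same way, and P\,1 then follows by the antisymmetry above (equivalently, it runs parallel to the New identity A (left) and C computations, whose leading word $\blw{\blw{u}}v$ it shares).

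The routine part is the monomial case and the mechanical substitution; the genuine obstacle is controlling the auxiliary sum $\sum_i\alpha_i\,q_i\suba{\phi(u,v)}$ produced by Lemma~\mref{lem:jeq}. Each bracket expansion introduces one more summand than its associative analogue, so one must verify that after the rewriting $u\blw{\blw{v}}\mapsto\blw{[u\blw{v}]}$ these auxiliary families cancel in matched pairs rather than accumulating. This bookkeeping---keeping the two kinds of iterated operators $\blw{\blw{\cdot}}$ and $\blw{[\cdot]}$ straight while the Jacobi identity redistributes terms---is the step I would carry out most carefully, exactly as the more involved New identity C computation in Proposition~\mref{pr:c} illustrates.
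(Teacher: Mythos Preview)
Your proposal is correct and follows essentially the same approach as the paper: the monomial identities P\,2, P\,4, P\,5 are dismissed immediately, and the paper then proves P\,1 in detail (declaring P\,3 similar), which matches your antisymmetry reduction and your identification of the two including compositions with leading word $u\blw{\blw{v}}$ (resp.\ $\blw{\blw{u}}v$). One simplification worth noting: the auxiliary sum $\sum_i\alpha_i\,[q_i\suba{\phi(u,v)}\cdots]$ from Lemma~\mref{lem:jeq} that you flag as the ``genuine obstacle'' is in fact harmless---each summand already contains $\phi(u,v)$ as a subfactor and has leading word strictly smaller than $w_1$, so it is $\equiv 0 \bmod(S_\phi,w_1)$ directly, with no rewriting or pairwise cancellation required.
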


\begin{proof}
The monomial OLPIs of P 2, P 4 and P 5 are \GS, respectively.
We are left to prove that the OLPI of P 1 is \GS, as the case of P 3 is similar. Define
$\phi(x,y):=[\blw{\blw{x}}y] -\blw{[\blw{x}y]}$
and
$$S_\phi:=\{\phi(u,v)\,|\,u,v\in\alsbwo{X}{\ordqc} \}.$$
Notice that $\lbar{\phi(u,v)}=\blw{\blw{u}}v$ and there are two compositions
\begin{eqnarray*}
&&\langle\phi(q\suba{ \blw{\blw{u}}v},w), \phi(u,v)\rangle_{w_1} \,\text{ for }\, w_1=\blw{\blw{q\suba{ \blw{\blw{u}}v}}}w\in \alsbwo{X}{\ordqc},\\
&&\langle\phi(u, q\suba{ \blw{\blw{v}}w}),\phi(v,w)\rangle_{w_2} \,\text{ for }\, w_2=\blw{\blw{u}}q\suba{ \blw{\blw{v}}w}\in \alsbwo{X}{\ordqc}.
\end{eqnarray*}
For the first one, using Lemma~\mref{lem:jeq}, we have
\begin{eqnarray*}
&&[\blw{\blw{q\suba{ \phi(u,v)}}}w]_{\blw{\blw{u}}v}\\
&=& [\blw{\blw{q\suba{ \phi(u,v)}}}w]+\sum_i\alpha_i\Big([\blw{\blw{q_i\suba{ \phi(u,v)}}}w]\big)\\
&=&  [\blw{\blw{q\suba{   [\blw{\blw{u}}v]          }}}w]
-[\blw{\blw{q\suba{  \blw{[\blw{u}v] }          }}}w]
+\sum_i\alpha_i\Big([\blw{\blw{q_i\suba{ \phi(u,v)}}}w]\big),
\end{eqnarray*}
for some $\alpha_i\in\bfk$, $q_i\in\sopm{X}$ with $q\suba{u \blw{\blw{v}}}\ordc q_i\suba{u \blw{\blw{v}}}$.
Therefore,
\begin{eqnarray*}
&&\langle\phi(q\suba{ \blw{\blw{u}}v},w), \phi(u,v)\rangle_{w_1}\\
&=& \phi(q\suba{ \blw{\blw{u}}v},w)-[\blw{\blw{q\suba{ \phi(u,v)}}}w]_{\blw{\blw{u}}v}\\
&=& [\blw{\blw{q\suba{ [\blw{\blw{u}}v]}}}w] -\blw{[\blw{q\suba{ [\blw{\blw{u}}v]}}w]}
-[\blw{\blw{q\suba{   [\blw{\blw{u}}v]          }}}w]
+[\blw{\blw{q\suba{  \blw{[\blw{u}v] }          }}}w]
-\sum_i\alpha_i\Big([\blw{\blw{q_i\suba{ \phi(u,v)}}}w]\big)\\
&=& -\blw{[\blw{q\suba{ [\blw{\blw{u}}v]}}w]}
+[\blw{\blw{q\suba{  \blw{[\blw{u}v] }          }}}w]
-\sum_i\alpha_i\Big([\blw{\blw{q_i\suba{ \phi(u,v)}}}w]\big)\\
&\equiv&
\blw{[\blw{q\suba{ \blw{[\blw{u}v]}}}w]}
-\blw{[\blw{q\suba{  \blw{[\blw{u}v] }          }}w]}
-\sum_i\alpha_i\Big([\blw{\blw{q_i\suba{ \phi(u,v)}}}w]\big)\\
&\equiv&-\sum_i\alpha_i\Big([\blw{\blw{q_i\suba{ \phi(u,v)}}}w]\big)\\
&\equiv&0 \mod( S_\phi, w_1).
\end{eqnarray*}
This completes the proof.
\end{proof}

The following theorem is the second main result of this paper.
\begin{theorem}\mlabel{tm:sr}
Let $X$ be a well-ordered set. Twenty OLPIs in Eqs.~\eqref{ea:d2} and~\eqref{ea:d3}
are respectively \GS with respect to the invariant monomial orders $\ordt$ or $\ordqc$.
\end{theorem}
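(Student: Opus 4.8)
The plan is to recognize that Theorem~\mref{tm:sr} is a consolidation of the results already established in this subsection, so the only genuine task is to check that the twenty listed OLPIs are exactly exhausted by those results and to record which invariant monomial order is used in each case. The phrase ``$\ordt$ or $\ordqc$'' in the statement signals the natural split: the identities of Eq.~\eqref{ea:d2} are handled with $\ordt$, while those of Eq.~\eqref{ea:d3} are handled with $\ordqc$.

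First I would dispatch the six OLPIs in Eq.~\eqref{ea:d2}. Three of them are monomial, hence trivially \GS, and the remaining three arise from the binomials of Proposition~\mref{pr:d1g} by replacing $\blw{~}$ with $\blw{\blw{~}}$; both observations are precisely the content of Theorem~\mref{tm:d2}, proved with respect to $\ordt$.

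Next I would partition the fourteen OLPIs of Eq.~\eqref{ea:d3} and assign each to the proposition that handles it, all with respect to $\ordqc$: the Rota-Baxter, Nijenhuis, average, and inverse average identities to Proposition~\mref{pr:rb}; New identity A (right) and A (left) to Proposition~\mref{pr:a}; the two B type identities to Corollary~\mref{pr:b}, which first normalizes them via Remark~\mref{re:rb} and then invokes Proposition~\mref{pr:bb}; New identity C to Proposition~\mref{pr:c}; and the five P type identities to Proposition~\mref{pr:p}. Since $6+14=20$, every listed OLPI is covered, and collecting the orders $\ordt$ for Eq.~\eqref{ea:d2} and $\ordqc$ for Eq.~\eqref{ea:d3} yields the statement.

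The real difficulty, of course, resides in the cited propositions rather than in this bookkeeping. The hardest step---already carried out in Proposition~\mref{pr:c}---is New identity C: having six terms, it generates the most intricate composition, and one must expand $[\blw{\blw{q\suba{\phi(u,v)}}}w]_{\blw{\blw{u}}v}$ through Lemma~\mref{lem:jeq} and verify that every term cancels modulo $(S_\phi, w_1)$. A secondary subtlety, flagged in Remark~\mref{re:rb}, is that the B type identities must first be rewritten so that $\blw{\blw{xy}}$ is no longer the leading monomial before Proposition~\mref{pr:bb} applies, since in their original normalization they would fail to be \GS.
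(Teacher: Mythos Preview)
Your proposal is correct and matches the paper's own proof, which likewise consists of a single sentence citing Theorem~\mref{tm:d2}, Corollary~\mref{pr:b}, and Propositions~\mref{pr:rb}, \mref{pr:a}, \mref{pr:c}, \mref{pr:p}. Your additional bookkeeping---explicitly partitioning the twenty OLPIs among these results and recording which order is used where---is more detailed than the paper but entirely in the same spirit.
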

\begin{proof}
  It follows from Theorem~\mref{tm:d2}, Corollary~\mref{pr:b} and Propositions~\mref{pr:rb}, ~\mref{pr:a}, ~\mref{pr:c} and~\mref{pr:p}.
\end{proof}

\smallskip

\noindent
{\bf Acknowledgments:} X. Gao is supported by the Natural Science Foundation of Gansu Province (25JRRA644), Innovative Fundamental Research Group Project of Gansu Province (23JRRA684) and Longyuan Young Talents of Gansu Province.
H. H. Zhang is supported by the Scientific Research Foundation of High-Level Talents of Yulin University (2025GK12) and Young Talent Fund of Association for Science and Technology in Shaanxi, China (20250530).
X. Y. Feng is supported by the Natural Science Project of Shaanxi Province (2022JQ-035).

\medskip
\noindent
{\bf Declaration of interests:} The authors have no conflicts of interest to disclose.

\noindent
{\bf Data availability:} Data sharing is not applicable to this article as no new data were created or
analyzed in this study.

\end{document}